\newtheorem{theorem}{Theorem}[section]
\newtheorem{lemma}[theorem]{Lemma}
\newtheorem{proposition}[theorem]{Proposition}
\newtheorem{remark}[theorem]{Remark}
\newtheorem{example}[theorem]{Example}
\newcommand{\Z}{\mathbb{Z}}
\newcommand{\F}{\mathbb{F}}
\renewcommand{\ker}{\operatorname{Ker}}
\newcommand{\id}{\operatorname{id}}
\newcommand{\rad}{\operatorname{rad}}
\newcommand{\Aut}{\operatorname{Aut}}
\newcommand{\Inn}{\operatorname{Inn}}
\newenvironment{proofof}{\par\noindent{\bf Proof}}{$\qed$\par\bigskip}
\newenvironment{proof}{\par\noindent{\bf Proof.}}{$\qed$\par\bigskip}
\newcommand{\qed}{\enspace\vrule  height6pt  width4pt  depth2pt}
\begin{document}

\title{An abundance of simple left braces with abelian multiplicative Sylow subgroups\thanks{The first author was partially supported by the grants
MINECO-FEDER  MTM2017-83487-P and AGAUR 2017SGR1725 (Spain). The
second author is supported in part by Onderzoeksraad of Vrije
Universiteit Brussel and Fonds voor Wetenschappelijk Onderzoek
(Belgium). The third author is supported by the National Science
Centre  grant 2016/23/B/ST1/01045 (Poland).
\newline {\em 2010 MSC:} Primary 16T25, 20D10, 20E22.
\newline {\em Keywords:} Yang-Baxter equation, set-theoretic solution,
brace, simple, $A$-group.}}
\author{F. Ced\'o \and E. Jespers \and J. Okni\'{n}ski}
\date{}

\maketitle

\begin{abstract}
Braces were introduced by Rump to study  involutive
non-de\-gene\-rate set-theoretic solutions of the Yang-Baxter
equation. A constructive method  for producing all such finite
solutions from a description of all finite left braces has been
recently discovered. It is thus a fundamental problem to construct
and classify all simple left braces, as they can be considered as
building blocks for the general theory. This program recently has
been initiated by Bachiller and the authors. In this paper we study
the simple finite left braces such that the Sylow subgroups of their
multiplicative groups are abelian. We provide several new families
of such simple left braces. In particular, they lead to the main,
surprising result, that shows that there is an abundance of such
simple left braces.
\end{abstract}

\section{Introduction}

The quantum Yang-Baxter equation is one of the basic equations of
mathematical physics \cite{Yang} and it has proved to be a
fundamental tool in the theory of quantum groups and related
areas, \cite{K}. In order to describe all involutive
non-degenerate set-theoretic solutions of the Yang-Baxter
equation, a problem posed by Drinfeld \cite{drinfeld}, Rump in
\cite{Rump} introduced the new algebraic object, called a left
brace. A left brace $(B,+,\cdot)$  is a set $B$ with two
operations, $+$ and $\cdot$,
 such that $(B,+)$ is an abelian group, $(B,\cdot)$ is a group and
$a\cdot (b+c)+a=a\cdot b+a\cdot c$ for all $a,b,c\in B$. Braces
have been shown to appear in an intriguing way in several areas of
mathematics, see the surveys \cite{Rump7, C18}. The class of
set-theoretic solutions of the Yang-Baxter equation that are
involutive and non-degenerate has received a lot of attention in
recent years, see for example \cite{CJO2, CJR, ESS, GI, GIC,
GIVdB, Ven1, JObook, Rump1, Rump}.  It has been shown that all
such finite solutions can be obtained, in a constructive way, from
a description of all finite left braces \cite{BCJ}. It thus is a
fundamental problem to classify the building blocks in the class
of all finite left braces, i.e. describe all finite simple left
braces. For finite left braces with nilpotent multiplicative
group, such objects are precisely  the cyclic groups of prime
order, with the operations $\cdot$ and $+$ coinciding (see
\cite[Corollary on page 166]{Rump} and
\cite[Proposition~6.1]{B3}). Attacking the classification problem
from another perspective, Rump in \cite{Rump8} completed the
classification of braces with a cyclic additive group; these
so-called cyclic braces \cite{Rump2} are equivalent to the
T-structures of Etingof et al. \cite{ESS}.

Recall that the multiplicative group $(B,\cdot )$ of any finite left
brace $(B,+,\cdot )$ is solvable  (see \cite[Theorem~2.15]{ESS}
and also \cite[page~167]{Rump}). One of the natural questions that
arose in this context is to determine the orders of finite simple
left braces. In particular in \cite{BCJO} (Problem 5.2) the
following problem is stated: for a given pair of different primes
$p$ and $q$, determine exponents $n$ and $m$ for which there exists
a simple left brace of order $p^nq^m$. A breakthrough came from the
recent result of Bachiller, providing the first nontrivial example
of  a finite simple left  brace \cite{B3}. In \cite{BCJO} and
\cite{BCJO2}, Bachiller and the authors have made progress on
constructing several classes of finite simple  left braces, this via
the matched products and asymmetric products of braces.

The main aim of this paper is to prove the following surprising
theorem that shows that there is an abundance of finite simple left
braces, even when the multiplicative group is metabelian and all its
Sylow subgroups are abelian (the latter are the so called
$A$-groups, \cite{taunt}).

\begin{theorem}\label{MainResult}
 Let $n>1$ be an integer. Let $p_{1}, p_{2}, \dots, p_{n}$ be
distinct primes. There exist positive integers $l_1, l_2, \dots,
l_n$, only depending on $p_{1},p_{2}, \dots , p_n$, such that for
each $n$-tuple   of integers $m_1\geq l_1$, $m_2 \geq l_2, \dots
,m_n\geq l_n$ there exists a simple left brace of order
$p_{1}^{m_1}p_{2}^{m_2}\cdots p_n^{m_{n}}$ that has a metabelian
multiplicative group with abelian Sylow subgroups.
\end{theorem}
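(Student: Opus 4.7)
The plan is to prove Theorem~\ref{MainResult} by an inductive construction on $n$ using asymmetric products of left braces, the main tool developed in \cite{BCJO2}. For the base case $n = 2$, I would invoke (or lightly adapt) the families of simple braces of order $p_1^{a_1} p_2^{a_2}$ with abelian multiplicative Sylow subgroups already produced in \cite{BCJO} and \cite{BCJO2}; the exponents arising there serve as the initial values of $l_1, l_2$ and seed the induction.

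For the inductive step, assume a simple left brace $B$ of order $p_1^{m_1} \cdots p_{n-1}^{m_{n-1}}$ with metabelian multiplicative group and abelian multiplicative Sylow subgroups is already in hand. I would form $A = B \rtimes_{\alpha, \beta} T$, where $T$ is the trivial brace on an abelian $p_n$-group of the required $p_n$-power order, $\alpha : T \to \Aut(B)$ is an action by brace automorphisms, and $\beta : T \times T \to \soc(B)$ is a symmetric bilinear form satisfying the asymmetric product axioms. By construction $|A| = |B| \cdot |T| = p_1^{m_1} \cdots p_n^{m_n}$, the additive group $(A,+) = (B,+) \oplus (T,+)$ is abelian, and $(A, \cdot)$ is an extension of $(B, \cdot)$ by the abelian $p_n$-group $(T, \cdot)$. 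Provided $\alpha$ has $p_n$-power order and $(B, \cdot)$ is metabelian, the commutator subgroup of $(A, \cdot)$ stays inside $(B, \cdot)$ up to a controlled contribution from $\alpha$ and hence remains abelian.

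The main obstacle is choosing $(\alpha, \beta)$ so that $A$ is simple while its multiplicative Sylow subgroups stay abelian. Simplicity of an asymmetric product, via the criteria of \cite{BCJO2}, requires that no proper nonzero ideal of $B$ is both $\alpha$-invariant and closed under $\beta$, and also that $\alpha$ be nontrivial enough to rule out any nonzero subbrace of $T$ being an ideal of $A$. On the other hand, keeping the $p_n$-Sylow of $(A, \cdot)$ abelian forces the image of $\alpha$ in $\Aut(B)$ to be an abelian $p_n$-group, so one must exhibit an abelian $p_n$-subgroup of $\Aut(B)$ large enough to destroy every proper ideal of $B$ and yet faithfully realizable on a suitable $T$. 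The thresholds $l_i$ arise precisely as the minimal sizes of $B$ and $T$ for which such a choice is possible, and they depend on $p_1, \ldots, p_n$ through the orders of the relevant automorphism groups (e.g.\ through the multiplicative orders of certain $p_j$ modulo powers of $p_n$).

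Finally, to extend from the minimal exponents $l_i$ to arbitrary $m_i \geq l_i$, I would iterate the asymmetric product at each prime, inflating the $p_i$-part via further trivial $p_i$-braces of appropriate size, and verify at each stage that simplicity, the metabelian property, and the abelian Sylow structure are preserved. The routine verifications then reduce to checking the cocycle and bilinearity conditions for the asymmetric products, together with tracking how the ideal lattice behaves under inflation; the conceptual work is concentrated in producing the right action $\alpha$ at each prime.
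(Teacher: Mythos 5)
There is a genuine gap, and it is fatal to the core of your plan: the inductive step cannot produce a simple brace. Your extension $A = B \rtimes_{\alpha,\beta} T$ glues a brace $B$ of order $p_1^{m_1}\cdots p_{n-1}^{m_{n-1}}$ to a $p_n$-group $T$ via a bilinear map $\beta$, but any biadditive map between abelian groups of coprime exponents is identically zero: if $e$ is the exponent of $T$ and $e'$ that of $\soc(B)$, then $e\,\beta(x,y)=\beta(ex,y)=0$ and $e'\,\beta(x,y)=0$, so $\gcd(e,e')=1$ forces $\beta=0$ (the same argument kills the form in the other orientation $B\times B\to T$; note also that your two maps have incompatible orientations for the asymmetric product of \cite{CCS}: if $\alpha\colon T\to\Aut(B)$ then the form must go $B\times B\to T$, not $T\times T\to\soc(B)$). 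Hence your construction degenerates to a semidirect product of braces, and in a semidirect product the acted-upon factor is the kernel of the projection onto the other factor, which is a brace homomorphism, so it is a proper nonzero ideal. This is precisely the phenomenon the paper exploits in the opposite direction in Proposition~\ref{prime}: a semidirect product of a simple brace by $\Z/(p)$ is \emph{prime but never simple}, because $B\times\{0\}$ survives as an ideal. No choice of $\alpha$ can repair this, since the obstruction is the vanishing of $\beta$, not a deficiency of the action. Your base case has a second, independent problem: \cite{BCJO} and \cite{BCJO2} do not contain simple braces of order $p_1^{a_1}p_2^{a_2}$ whose multiplicative Sylow subgroups are abelian; producing the first such examples is exactly the novelty of this paper (indeed, the paper remarks that the braces from \cite[Subsection~5.3]{BCJO2} have non-abelian Sylow $3$-subgroups), so you would be assuming for $n=2$ the very objects the theorem is meant to construct.

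The paper's proof avoids both problems by abandoning induction entirely and building the brace in one shot as a single asymmetric product $T\rtimes_{\circ}S$ in which \emph{both} factors decompose over all $n$ primes: $T=T_1\times\cdots\times T_n$ with $T_z=V_z^{m'_z}$ and $S=S_1\times\cdots\times S_n$ with $S_z=(\Z/(p_z))^{r_z}$. The bilinear form is confined within each prime (each $b'_z\colon T_z\times T_z\to S_z$ takes values in the same characteristic, so it can be nonzero), while the cross-prime coupling is carried entirely by the \emph{action}: $S_{z-1}$ acts on $T_z$ through powers of an orthogonal map $f_z\in\mathrm{O}(V_z,b_z)$ of order $p_{z-1}$ with $f_z-\id$ bijective, arranged cyclically around $\Z/(n)$. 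Theorem~\ref{simple0} shows this product is simple if and only if every $f_z-\id$ is bijective (a nonzero ideal is propagated around the whole cycle of primes by Claims 1--4), and Proposition~\ref{construction1} gives the metabelian $A$-group structure. The explicit $f_z$, built from the companion matrix of $x^{p_{z-1}-1}+\cdots+x+1$ acting on a hyperbolic space of dimension $2(p_{z-1}-1)$ over $\Z/(p_z)$, yields both the existence and the bounds $l_z=\max\{\dim(V_z),\dim(V_{z-1})\}(\dim(V_z)+1)$; the freedom $m_z\geq l_z$ then comes from tuning the multiplicities $m'_z$ and ranks $r_z$, not from iterating extensions.
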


The paper is organized as follows. We start in Section 2 with
correcting the proof of a result from \cite{BDG2}, saying that every
finite solvable $A$-group $(B,\cdot)$ admits a structure of a left
brace. We also discuss the structure of finite left braces of this
type, based on the structure of $A$-groups \cite{hall,taunt}. Then a
very general new construction of left braces of the latter type is
presented. These braces $(B,+,\cdot)$ have the additional property
that the multiplicative group $(B, \cdot)$ is metabelian. In Section
3 we construct a new family of finite simple left braces. This
relies on concrete classes of braces presented in Section 2.
Consequently, the main result of the paper,
Theorem~\ref{MainResult}, is proved. Our construction requires
special elements of the Sylow $p$-subgroups of the orthogonal groups
over finite fields of characteristic relatively prime to $p$. The
existence and limitations concerning such elements are discussed in
Section 4.  A problem on prime braces, recently proposed in
\cite{KSV}, Question~4.3, is solved in Section 5. Finally, in
Section 6, we construct another new family of finite simple left
braces using the technique of Section 3.

\section{Finite left braces with abelian multiplicative Sylow subgroups}\label{agroup}

A classical result of P. Hall states that every finite solvable
group $G$ has a Sylow system, that is a family of Sylow subgroups
$\mathcal{S}=\{ S_1,\dots ,S_n\}$ of $G$, one for each prime
dividing the order of $G$, such that $S_iS_j=S_jS_i$ for all $i,j\in
\{1, \dots, n\}$  (see \cite[pages 137--139]{KS}). The system
normalizer of $G$ associated with $\mathcal{S}$ is
$M(G)=\bigcap_{i=1}^n N_G(S_i)$. If the solvable group $G$ is a
normal subgroup of a non necessarily solvable finite group $L$, the
system normalizer of $G$ relative to $L$ associated to $\mathcal{S}$
is $M_L(G)=\bigcap_{i=1}^n N_L(S_i)$. System normalizers  and
relative system normalizers were introduced by P. Hall in
\cite{hall}.

Recall that the multiplicative group $(B,\cdot )$ of any finite left
brace $(B,+,\cdot )$ is solvable. But not all  finite solvable
groups are isomorphic to the multiplicative group of a left brace.
In fact,
 there exist finite $p$-groups that are not
isomorphic to the multiplicative group of any left brace \cite{B2}.
The groups isomorphic to the multiplicative group of a finite left
brace are called involutive Yang-Baxter groups (or simply IYB
groups) in \cite{CJR}.

In \cite[Corollary~4.3]{BDG2} it is stated that any finite solvable
$A$-group is an IYB group.  However, its proof is not correct,
because it is based on the statement that every finite solvable
$A$-group has a normal Sylow subgroup; this is not true, as for
example the group $A_4\times S_3$ is a solvable $A$-group without
Sylow normal subgroups.

First, we give a proof of \cite[Corollary~4.3]{BDG2}.

\begin{theorem} \label{Atype-IYB}
Every finite solvable $A$-group is an IYB group.
\end{theorem}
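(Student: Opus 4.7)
My plan is to argue by induction on $|G|$; the base case $|G|=1$ is trivial. For the inductive step, let $G$ be a finite solvable $A$-group. The main aim is to produce a non-trivial complemented abelian normal subgroup $A\triangleleft G$, say $G=A\rtimes K$. Then $K\cong G/A$ is again a finite solvable $A$-group of strictly smaller order (both solvability and the condition that all Sylow subgroups be abelian pass to quotients), so by the induction hypothesis $K$ is an IYB group, realised as the multiplicative group of some left brace $(K,+,\cdot)$. Equip $A$ with its trivial brace structure $a+b=ab$ and note that on a trivial brace the brace automorphisms coincide with the group automorphisms, so the conjugation action of $K$ on $A$ is automatically an action by brace automorphisms. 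The semidirect product of left braces $A\rtimes K$ — multiplicatively the usual semidirect product, hence $G$ as a group; additively the direct sum of the underlying abelian groups of $A$ and $K$ — is then a left brace whose multiplicative group is precisely $G$, and a short computation using the brace axiom on $K$ verifies the brace axiom on the product; thus $G$ is IYB.

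The whole proof therefore reduces to the structural step of producing such an $A$. The natural candidate is the Fitting subgroup $A=F(G)$: being nilpotent with abelian Sylow subgroups (as a subgroup of the $A$-group $G$), $F(G)$ is the direct product of its abelian Sylow subgroups and hence itself abelian. To extract a complement of $F(G)$ in $G$, I would invoke the Hall-Taunt theory of solvable $A$-groups \cite{hall,taunt}: fix a Sylow system $\{S_1,\dots,S_n\}$ of $G$ and form its system normalizer $M(G)=\bigcap_{i=1}^n N_G(S_i)$; exploit that $M(G)$ is abelian in an $A$-group and that the $S_i$ commute pairwise, and glue a complement of $F(G)$ out of $M(G)$ together with appropriately chosen pieces of the abelian $S_i$. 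An alternative route is the Hall-Taunt decomposition $G=G'\rtimes M(G)$ combined with induction on derived length, but this requires carrying along a brace structure on $G'$ that is preserved by the conjugation action of $M(G)$, which is considerably more delicate.

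The main obstacle is exactly this structural step. Since $F(G)$ is in general not a Hall subgroup of $G$ — for instance in $A_4\times S_3$ one has $|F(G)|=V_4\times C_3$ of order $12$ inside a group of order $72$, with $\gcd(12,6)=6$ — Schur-Zassenhaus alone does not apply. The flawed proof of \cite[Corollary~4.3]{BDG2} bypassed this difficulty by assuming that $G$ has a normal Sylow subgroup, an assumption that already fails for $A_4\times S_3$; the corrected proof cannot stay within pure Sylow theory and must commit to the finer Hall-Taunt analysis via Sylow systems and system normalizers. Once the splitting $G=F(G)\rtimes K$ (or any analogous complemented abelian-normal splitting) is in hand, the brace-theoretic assembly described in the first paragraph is essentially formal and the induction closes.
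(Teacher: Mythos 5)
Your opening paragraph---the brace-theoretic assembly---is correct, and it is exactly the mechanism the paper relies on: putting the trivial brace structure on an abelian kernel $A$ makes every group automorphism of $A$ a brace automorphism, so a semidirect product of a finite abelian group by an IYB group is IYB for \emph{any} action; this is the content of \cite[Theorem~3.3]{CJR}, which the paper cites rather than re-proves. The genuine gap is your structural step, and it is worse than an unproved assertion: the candidate you build the argument around is false. In a finite solvable $A$-group the Fitting subgroup need not be complemented. Take $G=\langle x,y\mid x^{3}=y^{4}=1,\ yxy^{-1}=x^{-1}\rangle\cong C_{3}\rtimes C_{4}$, the dicyclic group of order $12$: its Sylow subgroups are $C_{3}$ and $C_{4}$, so it is a solvable $A$-group, and $F(G)=\langle x\rangle\times\langle y^{2}\rangle\cong C_{6}$ has index $2$. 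A complement would be generated by an involution outside $F(G)$, but every element outside $F(G)$ is of the form $x^{i}y^{\pm1}$ and squares to the central element $y^{2}\neq 1$, so no complement exists. Hence no amount of gluing pieces of $M(G)$ and the $S_i$ can produce one, and what your induction actually needs---that \emph{some} nontrivial abelian normal subgroup of $G$ is complemented---is never established.

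That missing input is precisely what the paper takes from Taunt, and your reason for setting that route aside rests on a misreading of it. The paper does not split $G$ over $G'$ with the inductive brace structure carried by the kernel; it splits $G$ over $L_{n-1}$, the \emph{last} nontrivial term of the derived series (which is abelian), using \cite[(4.5)]{taunt}: the relative system normalizer $M_G(L_{n-2})$ is a complement to $L_{n-1}$ in $G$. The induction is on derived length: $M_G(L_{n-2})\cong G/L_{n-1}$ has derived length $n-1$, hence is IYB by induction, and then your own assembly (equivalently \cite[Theorem~3.3]{CJR}) concludes. So the inductively built brace sits on the \emph{complement}, the abelian kernel carries the trivial brace, and the compatibility problem you feared (a brace structure on $G'$ preserved by the $M(G)$-action) never arises. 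In the dicyclic example this is the splitting $G=G'\rtimes\langle y\rangle=C_{3}\rtimes C_{4}$, which exists even though a splitting over $F(G)$ does not. In short: your reduction is the right one, but the structural theorem it requires is Taunt's relative system normalizer result, not a complement to the Fitting subgroup.
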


\begin{proof}
Let $G$ be a finite solvable $A$-group. Let $G=L_0>L_1>\dots
>L_n=1$ be the derived series of $G$. We shall prove the result by
induction on $n$. It is known that every finite abelian group
is IYB  (see for example \cite[Corollary 3.10]{CJR}). Suppose
that $n>1$ and that every finite solvable $A$-group of derived
length $n-1$ is an IYB group. By \cite[(4.5)]{taunt}, $M_G(L_{n-2})$
is a complement in $G$ of $L_{n-1}$, i.e.  $G\cong
L_{n-1}\rtimes M_G(L_{n-2})$. Clearly the derived length of
$M_G(L_{n-2})$ is $n-1$ and thus, by the induction hypothesis, $M_G(L_{n-2})$
is an IYB group. By \cite[Theorem~3.3]{CJR}, $G$ is an IYB group.
\end{proof}

Our aim is to study the structure of finite left braces whose
multiplicative groups are metabelian $A$-groups, with an expectation
that under this natural restriction one can get decisive results.

For the convenience of the reader we recall some basic definitions
and facts that we will use (see \cite{B3,BCJO,CJO2}). Let
$(B,+,\cdot)$ be a left brace. The lambda map of $B$ is the
homomorphism $\lambda\colon (B,\cdot)\longrightarrow \Aut(B,+)$
defined by $\lambda(a)=\lambda_a$ and $\lambda_a(b)=ab-a$ for all
$a,b\in B$. Recall that a left ideal of the left brace $B$ is a
subgroup $L$ of its additive group such that $\lambda_b(a)\in L$
for all $a\in L$ and all $b\in B$. If $L$ is a left ideal of $B$,
then $b^{-1}c=\lambda_{b^{-1}}(c-b)\in L$, for all $b,c\in L$,
thus $L$ also is a subgroup of the multiplicative group of $B$. An
ideal of $B$ is a left ideal $I$ of $B$ such that $I$ is a normal
subgroup of $(B,\cdot)$;  in fact, ideals of left braces are
precisely the kernels of left brace homomorphisms. A nonzero left
brace $B$ is simple if $B$ and $0$ are its only ideals. If $B$ is
finite, then the Sylow subgroups $P_1,\dots, P_n$ of the additive
group of $B$ are left ideals of $B$. Furthermore $\mathcal{S}=\{
P_1,\dots ,P_n\}$ is a Sylow system of the multiplicative group of
$B$. We say that a left brace $B$ is trivial if $a+b=ab$ for all
$a,b\in B$.

Consider a finite left brace $(B,+,\cdot)$ such that $(B,\cdot)$ is
a metabelian $A$-group. Let $P_{1},\ldots, P_{n}$ be the Sylow
subgroups of its additive group. Thus $\mathcal{S}=\{ P_1,\dots
,P_n\}$ is a Sylow system of $(B,\cdot)$. Let $M(B)$ denote its
associated system normalizer, that is $M(B)=
\bigcap_{i=1}^{n}N_{B}(P_{i})$. It is known that the multiplicative
group of $B$ is $B=[B,B]\rtimes M(B)$, a semidirect product and
$M(B)$ is an abelian group, \cite{hall, taunt}.  Then
$[B,B]=T_{1}\times \cdots \times T_{n}$, for subgroups $T_{i}$ of
$(P_{i},\cdot)$, and $M(B)=S_{1}\times \cdots \times S_{n}$ for
subgroups $S_{i}$ of $(P_{i},\cdot)$. Since $(P_i,\cdot)$ is
abelian, $P_{i} = T_{i}\times S_{i}$.

A special case that will be exploited in this paper is when
$(B,+,\cdot)$ is an asymmetric product $B=[B,B]\rtimes_{\circ} M(B)$
of two trivial left braces $[B,B], M(B)$ via a symmetric bilinear
map $b: [B,B]\longrightarrow M(B)$ (on $([B,B],+)$ with values in
$(M(B),+)$) and a homomorphism $$\alpha: (M(B),\cdot)\longrightarrow
\Aut ([B,B],+).$$

Recall that if $T$ and $S$ are two left braces, $b: T\times
T\longrightarrow S$ a  symmetric bilinear map on $(T,+)$ with values
in $(S,+)$, and $\alpha: (S,\cdot)\longrightarrow \Aut(T,+,\cdot)$ a
homomorphism of groups such that
\begin{eqnarray}
&&b(t_2,t_3)=b(\lambda_{t_1}(t_2),\lambda_{t_1}(t_3)),\label{asymmetric1}\\
&&\lambda_s(b(t_2,t_3))=b(\alpha_{s}(t_2),\alpha_{s}(t_3)),\label{asymmetric2}
\end{eqnarray}
for all $t_1,t_2,t_3\in T$ and $s\in S$, then $T\times S$ with the
addition and multiplication given by
$$(t_1,s_1)+(t_2,s_2)=(t_1+t_2,s_1+s_2+b(t_1,t_2)),$$
$$(t_1,s_1)\cdot (t_2,s_2)=(t_1+\alpha_{s_1}(t_2),s_1\cdot s_2)$$
is a left brace called the asymmetric product of $T$ by $S$ (via $b$
and $\alpha$) and denoted by $T\rtimes_{\circ} S$ (see \cite{CCS}).
Note that if $T$ and $S$ are trivial left braces, then condition
(\ref{asymmetric1}) is trivially satisfied and condition
(\ref{asymmetric2}) says that
\begin{equation}\label{asymm3}
\alpha_s\in \mathrm{O}(T,b)
\end{equation}
for all $s\in S$, where by $\mathrm{O}(T,b)$ we denote the
orthogonal group, that is the group consisting of the elements $f\in
\Aut(T,+)$ such that $b(t_1,t_2)=b(f(t_1),f(t_2))$ for all
$t_1,t_2\in T$.

Let $n>1$ be an integer. For each $z\in \Z/(n)$ let $T_{z}$ and
$S_{z}$ be trivial left braces. Let $b_{z}: T_{z}\times T_{z}
\longrightarrow S_{z}$ be a symmetric bilinear map.  For  $z,z'\in
\Z/(n)$ let
 $$f^{(z,z')}: S_{z} \longrightarrow \text{O}(T_{z'},b_{z'})$$
 be a homomorphism of multiplicative groups. The image of $s\in S_z$ we denote by $f^{(z,z')}_{s}$.
 We assume that
\begin{eqnarray}\label{commute}
&&f^{(z_{1},z)}_{s_{1}} f^{(z_2,z)}_{s_{2}} = f^{(z_2,z)}_{s_{2}}
f^{(z_{1},z)}_{s_{1}} \quad\text{and}\quad f^{(z,z)}_{s}=\id_{T_z},
\end{eqnarray}
for all $z_1, z_2,z \in \Z/(n)$, $s_1\in S_{z_1}$, $s_2\in S_{z_2}$
and $s\in S_z$. Let $T=T_1\times \cdots\times T_n$ and $S=S_1\times
\cdots\times S_n$  denote the direct products of the trivial left
braces $T_z$'s and $S_z$'s respectively. Thus $T$ and $S$ are
trivial left braces. Let $b\colon T\times T\longrightarrow S$ be the
symmetric bilinear map defined by
$$b((t_1,\dots ,t_n),(t'_1,\dots ,t'_n))=(b_1(t_1,t'_1),\dots ,b_n(t_n,t'_n)),$$
for all $t_z,t'_z\in T_z$. Let $\alpha\colon
(S,\cdot)\longrightarrow \Aut(T,+)$ be the map defined by
$\alpha(s_1,\dots, s_n)=\alpha_{(s_1,\dots, s_n)}$ and
\begin{eqnarray*}
\alpha_{(s_1,\dots,s_n)}(t_1,\dots,t_n) &=&(f_{s_1}^{(1,1)}\cdots
f_{s_n}^{(n,1)}(t_1),\dots,f_{s_1}^{(1,n)}\cdots
f_{s_n}^{(n,n)}(t_n)),
\end{eqnarray*}
for all $(s_1,\dots,s_n)\in S$ and $(t_1,\dots,t_n)\in T$.

\begin{lemma}   \label{bilinear}
$\alpha$ is a homomorphism of groups and $\alpha_{(s_1,\dots,
s_n)}\in \mathrm{O}(T,b)$ for all $(s_1,\dots ,s_n)\in S$. Therefore
we can construct the asymmetric product $T\rtimes_{\circ} S$ of $T$
by $S$ via $b$ and $\alpha$.
\end{lemma}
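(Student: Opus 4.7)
The plan is to verify, in order, three things: (i) $\alpha$ takes values in $\Aut(T,+)$ and is a group homomorphism; (ii) each $\alpha_{(s_1,\dots,s_n)}$ lies in $\mathrm{O}(T,b)$; and (iii) the two conditions (\ref{asymmetric1}) and (\ref{asymmetric2}) for the asymmetric product are satisfied. Step (iii) will follow immediately from the standing assumption that $T$ and $S$ are trivial left braces: in that case $\lambda_{t_1}=\id$ and $\lambda_{s}=\id$, so (\ref{asymmetric1}) is tautological and (\ref{asymmetric2}) collapses to (\ref{asymm3}), which is exactly step (ii).

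For (i), I would observe that the $z$-th coordinate of $\alpha_{(s_1,\dots,s_n)}$ acts as the composition $f^{(1,z)}_{s_1}\cdots f^{(n,z)}_{s_n}$, a product of elements of $\Aut(T_z,+)$; since $T = T_1 \times \cdots \times T_n$ is a direct product of additive groups, this gives $\alpha_{(s_1,\dots,s_n)}\in\Aut(T,+)$. To check the homomorphism property, I would multiply two tuples componentwise in $S = S_1 \times \cdots \times S_n$, expand each $f^{(i,z)}_{s_is_i'} = f^{(i,z)}_{s_i}\, f^{(i,z)}_{s_i'}$ using that $f^{(i,z)}$ is a homomorphism, and compare with the $z$-th coordinate of $\alpha_{(s_1,\dots,s_n)}\circ\alpha_{(s_1',\dots,s_n')}$, namely $\bigl(f^{(1,z)}_{s_1}\cdots f^{(n,z)}_{s_n}\bigr)\circ\bigl(f^{(1,z)}_{s_1'}\cdots f^{(n,z)}_{s_n'}\bigr)$. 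The two resulting products of $2n$ factors coincide after using condition (\ref{commute}) to interchange any $f^{(j,z)}_{s_j'}$ with any $f^{(i,z)}_{s_i}$.

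For (ii) the product structure of $b$ reduces the claim to each coordinate: writing $g_z := f^{(1,z)}_{s_1}\cdots f^{(n,z)}_{s_n}$, I need $b_z(g_z(t_z),g_z(t_z')) = b_z(t_z,t_z')$ for each $z$. Each factor $f^{(i,z)}_{s_i}$ lies in $\mathrm{O}(T_z,b_z)$ by the codomain of $f^{(i,z)}$, and $\mathrm{O}(T_z,b_z)$ is a subgroup of $\Aut(T_z,+)$, so $g_z$ lies in it and the claim is immediate. The only real obstacle in the whole argument is the reordering step in (i), which is precisely where the commuting hypothesis (\ref{commute}) is indispensable; everything else is routine componentwise book-keeping in a direct product of trivial braces. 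The auxiliary assumption $f^{(z,z)}_{s}=\id_{T_z}$ plays no role in this lemma and will presumably be used only later, to arrange the global structure of the resulting asymmetric product.
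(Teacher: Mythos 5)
Your proof is correct and takes essentially the same approach as the paper: the homomorphism property of $\alpha$ comes from the commuting hypothesis (\ref{commute}), orthogonality is verified coordinatewise using that each $f^{(i,z)}_{s_i}$ lies in $\mathrm{O}(T_z,b_z)$ and that $b$ splits as a product of the $b_z$, and the trivial-brace observation reduces conditions (\ref{asymmetric1}) and (\ref{asymmetric2}) to (\ref{asymm3}). Your closing remark is also accurate: the paper's proof of this lemma uses only the commuting part of (\ref{commute}), and the assumption $f^{(z,z)}_{s}=\id_{T_z}$ enters only later, for instance in the proof of Proposition~\ref{construction1}.
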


\begin{proof}
By (\ref{commute}), it is clear that $\alpha$ is a
homomorphism of groups. Let $(s_1,\dots,s_n)\in S$ and $(t_1,\dots
,t_n),(t'_1,\dots ,t'_n)\in T$. We have
\begin{eqnarray*}
\lefteqn{b(\alpha_{(s_1,\dots,s_n)}(t_1,\dots,t_n),\alpha_{(s_1,\dots,s_n)}(t'_1,\dots,t'_n))}\\
 &=&b((f_{s_1}^{(1,1)}\cdots
f_{s_n}^{(n,1)}(t_1),\dots,f_{s_1}^{(1,n)}\cdots
f_{s_n}^{(n,n)}(t_n)),\\
&&\quad (f_{s_1}^{(1,1)}\cdots
f_{s_n}^{(n,1)}(t'_1),\dots,f_{s_1}^{(1,n)}\cdots
f_{s_n}^{(n,n)}(t'_n)))\\
 &=&(b_1(f_{s_1}^{(1,1)}\cdots
f_{s_n}^{(n,1)}(t_1),f_{s_1}^{(1,1)}\cdots
f_{s_n}^{(n,1)}(t'_1)),\\
&&\quad \dots,b_n(f_{s_1}^{(1,n)}\cdots
f_{s_n}^{(n,n)}(t_n),f_{s_1}^{(1,n)}\cdots
f_{s_n}^{(n,n)}(t'_n)))\\
 &=&(b_1(t_1,t'_1),\dots,b_n(t_n,t'_n)).
\end{eqnarray*}
Hence $\alpha_{(s_1,\dots,s_n)}\in \mathrm{O}(T,b)$ for all
$(s_1,\dots ,s_n)\in S$. Thus the result follows.
\end{proof}
For $z\in \Z/(n)$, let $A_z=\{ ((t_1,\dots ,t_n),(s_1,\dots
,s_n))\in T\rtimes_{\circ}S\mid t_{z'}=0 \text{ and }s_{z'}=0 \text{
for all }z'\neq z \}$.

\begin{proposition}    \label{construction1}
Each $A_z$ is a left ideal of $T\rtimes_{\circ}S$ with abelian
multiplicative group. The additive group of $T\rtimes_{\circ}S$ is
the direct sum of the additive groups of the left ideals $A_z$. If
$\alpha$ is not trivial, then the multiplicative group of
$T\rtimes_{\circ}S$ is  metabelian but not abelian. Furthermore, if
the left ideals $A_z$ are finite  and  of relatively prime orders,
then the multiplicative group of $T\rtimes_{\circ}S$ also is an
$A$-group.
\end{proposition}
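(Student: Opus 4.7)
The proof will be a systematic verification of each of the four assertions by unpacking the asymmetric product operations, exploiting two structural features built into the construction: first, $b$ is block-diagonal in the sense that the $z$-th coordinate of $b((t_{z'}),(t'_{z'}))$ equals $b_z(t_z,t'_z)$ and depends only on the $z$-th entries; second, $\alpha_{(s_1,\dots,s_n)}$ preserves each factor $T_z$, and moreover its restriction to $T_z$ is the identity whenever all the $s_i$ vanish outside the $z$-th coordinate, since $f^{(z,z)}_s=\id_{T_z}$ by \eqref{commute}.

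To show that $A_z$ is a left ideal with abelian multiplicative group, I would first verify additive closure: if both summands are supported on the $z$-th coordinate, the cross-term $b(\cdot,\cdot)$ in the addition formula of $T\rtimes_\circ S$ is also supported on the $z$-th coordinate, so the sum lies in $A_z$. Closure under $\lambda_{(t,s)}$ then follows from $\lambda_{(t,s)}((t',s'))=(t,s)(t',s')-(t,s)$ once one observes that $\alpha$ preserves the coordinate decomposition of $T$. On $A_z$ itself the multiplication simplifies to $((0,\dots,t,\dots,0),(0,\dots,s,\dots,0))\cdot((0,\dots,t',\dots,0),(0,\dots,s',\dots,0))=((0,\dots,t+t',\dots,0),(0,\dots,s\cdot s',\dots,0))$ since both $f^{(z,z)}_s$ and $f^{(z,z)}_{s'}$ act trivially, and this formula is symmetric. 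The additive direct-sum decomposition is then immediate: writing a generic element as the formal sum of its coordinate projections, each cross-term $b(\cdot,\cdot)$ that appears vanishes by bilinearity, because in each coordinate at least one of the two arguments is zero.

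For the metabelian-but-not-abelian claim, I would compute the commutator $[(t_1,s_1),(t_2,s_2)]$ in $(T\rtimes_\circ S,\cdot)$. Since $(S,\cdot)$ is abelian (as $S$ is a trivial brace), the $S$-coordinate of any such commutator is zero, so the derived subgroup lies in $T\times\{0\}$; on this subgroup the multiplication agrees with the multiplication of the trivial brace $T$ and is hence abelian. Non-abelianness when $\alpha$ is nontrivial follows by choosing $s\in S$ and $t\in T$ with $\alpha_s(t)\neq t$: the elements $(t,0)$ and $(0,s)$ fail to commute.

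Finally, assuming the $A_z$ have pairwise coprime orders, $|T\rtimes_\circ S|=\prod_z|A_z|$ factors accordingly, and any Sylow $p$-subgroup of the multiplicative group is forced for order reasons to sit inside the unique $A_z$ whose order is divisible by $p$; since that $A_z$ has abelian multiplicative group, the Sylow subgroup is abelian. The main technical point in the whole argument is the left-ideal check, specifically $\lambda$-closure, which rests on the coordinate-preserving behaviour of $\alpha$ and thus on the commutativity hypothesis \eqref{commute}; the remaining arguments are essentially mechanical manipulations with the asymmetric product formulas.
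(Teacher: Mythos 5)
Your treatment of the first three assertions is correct and follows essentially the same route as the paper: the left-ideal property is verified by direct computation with the asymmetric-product formulas, using exactly the two structural features you isolate (the block-diagonal form of $b$ and the coordinate-preserving form of $\alpha$, with $f^{(z,z)}_s=\id$ making the multiplication on $A_z$ reduce to the componentwise abelian one), and metabelianity comes from viewing $(T\rtimes_{\circ}S,\cdot)$ as a semidirect product of the two abelian groups $(T,+)$ and $(S,+)$ via $\alpha$. You are in fact slightly more explicit than the paper on the direct-sum decomposition and on producing a non-commuting pair when $\alpha$ is nontrivial.

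There is, however, a genuine error in your last step. The claim that any Sylow $p$-subgroup of the multiplicative group is ``forced for order reasons to sit inside the unique $A_z$ whose order is divisible by $p$'' is false: a subgroup whose order contains the full $p$-part of the group order contains \emph{some} Sylow $p$-subgroup, but it contains \emph{all} of them only if it is normal (compare $H=\{e,(12)\}$ in $S_3$). Here $A_z$ is only a left ideal and is in general not normal in $(T\rtimes_{\circ}S,\cdot)$; indeed, in the intended application (the simple braces of Section 3) the $A_z$ can never be normal, since a normal left ideal is an ideal and would contradict simplicity, so conjugates of a Sylow subgroup of $A_z$ really do escape $A_z$. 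The conclusion nevertheless holds by the standard Sylow argument: since the $|A_{z'}|$ are pairwise coprime and their product is the group order, a Sylow $p$-subgroup of $A_z$ already has full $p$-power order and is therefore a Sylow $p$-subgroup of the whole group; it is abelian because $(A_z,\cdot)$ is abelian, and every other Sylow $p$-subgroup is conjugate to it, hence abelian as well. With this one-sentence repair your proof is complete, and it then matches the paper's, which dismisses precisely this step with the remark that the assertion ``follows'' from the abelianity of each $(A_z,\cdot)$.
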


\begin{proof}
Note that, for all $u,v\in T$ and $s,s'\in S$,
$(u+v,s+s')-(u,s)=(v,s'-b(u,v))$ by the definition of the sum in
the asymmetric product. Thus we have
\begin{eqnarray*}
\lefteqn{\lambda_{((t_1,\dots ,t_n),(s_1,\dots, s_n))}((t'_1,\dots
,t'_n),(s'_1,\dots, s'_n))}\\
&=&((t_1,\dots ,t_n),(s_1,\dots, s_n))((t'_1,\dots
,t'_n),(s'_1,\dots, s'_n))\\
&&-((t_1,\dots ,t_n),(s_1,\dots, s_n))\\
&=&((t_1,\dots ,t_n)\cdot\alpha_{(s_1,\dots,s_n)}(t'_1,\dots,
t'_n),\\
&&\; (s_1,\dots
,s_n)\cdot(s'_1,\dots, s'_n))-((t_1,\dots ,t_n),(s_1,\dots, s_n))\\
&=&((t_1+f_{s_1}^{(1,1)}\cdots f_{s_n}^{(n,1)}(t'_1),\dots
,t_n+f_{s_1}^{(1,n)}\cdots f_{s_n}^{(n,n)}(t'_n)),\\
&&\; (s_1+s'_1,\dots
,s_n+s'_n))-((t_1,\dots ,t_n),(s_1,\dots, s_n))\\
&=&((f_{s_1}^{(1,1)}\cdots f_{s_n}^{(n,1)}(t'_1),\dots
,f_{s_1}^{(1,n)}\cdots f_{s_n}^{(n,n)}(t'_n)),(s'_1,\dots
,s'_n)\\
&&\quad -b((t_1,\dots ,t_n),(f_{s_1}^{(1,1)}\cdots
f_{s_n}^{(n,1)}(t'_1),\dots
,f_{s_1}^{(1,n)}\cdots f_{s_n}^{(n,n)}(t'_n))))\\
&=&((f_{s_1}^{(1,1)}\cdots f_{s_n}^{(n,1)}(t'_1),\dots
,f_{s_1}^{(1,n)}\cdots
f_{s_n}^{(n,n)}(t'_n)),\\
&&\; (s'_1-b_1(t_1,f_{s_1}^{(1,1)}\cdots
f_{s_n}^{(n,1)}(t'_1)),\dots ,s'_n-b_n(t_n,f_{s_1}^{(1,n)}\cdots
f_{s_n}^{(n,n)}(t'_n)))),
\end{eqnarray*}
for all $(t_1,\dots ,t_n),(t'_1,\dots ,t'_n)\in T$ and $(s_1,\dots,
s_n),(s'_1,\dots ,s'_n)\in S$.  Hence it is clear that
$$\lambda_{((t_1,\dots ,t_n),(s_1,\dots, s_n))}((t'_1,\dots
,t'_n),(s'_1,\dots, s'_n))\in A_z,$$ for all $((t_1,\dots
,t_n),(s_1,\dots, s_n))\in T\rtimes_{\circ}S$ and $((t'_1,\dots
,t'_n),(s'_1,\dots, s'_n))\in A_z$. Let $((t_1,\dots,
,t_n),(s_1,\dots,s_n)),((t'_1,\dots,t'_n),(s'_1,\dots,s'_n))\in
A_z$. By (\ref{commute}), we have that
\begin{eqnarray*}\lefteqn{((t_1,\dots,t_n),(s_1,\dots,s_n))\cdot
((t'_1,\dots,t'_n),(s'_1,\dots,s'_n))}\\
&=&((0,\dots, 0,t_z+t'_z,0,\dots,0),(0,\dots,
0,s_z+s'_z,0,\dots,0))\in A_z.
\end{eqnarray*}
Hence $A_z$ is a left ideal of $T\rtimes_{\circ}S$ with abelian
multiplicative group.

Note that the multiplicative group of $T\rtimes_{\circ}S$ is the
semidirect product of two abelian groups via the action $\alpha$.
Hence, if $\alpha$ is not trivial, this group is metabelian  and not
abelian.

Since the multiplicative group of each $A_z$ is abelian, the last
assertion also follows.
\end{proof}

\begin{remark}{\rm
Assume that $(B,+,\cdot)$ is a finite simple left brace and
$(B,\cdot)$ is a metabelian $A$-group. If one of the Sylow subgroups
of $(B,\cdot)$ is cyclic, then $B$ is a cyclic (and trivial) left
brace. Indeed, let $B_{1},\ldots , B_{n}$ be the Sylow subgroups of
the additive group of $B$. Then $\mathcal{S}=\{ B_{1},\ldots ,
B_{n}\}$ is a Sylow system of $(B,\cdot)$. Say, $B_1$ is cyclic
$p_1$-group, for some prime $p_1$, and suppose that $n>1$.  Write
the multiplicative group of $B$ as $B= [B,B] M(B)=[B,B]\rtimes
M(B)$. Let $S_1$ be the Sylow $p_1$-subgroup of $[B,B]$ and let
$S_2$ be the Sylow $p_1$-subgroup of $M(B)$. Then $S_1S_2$ is an
inner direct product because $(B,\cdot)$ is an $A$-group and it is a
Sylow $p_1$-subgroup of $B$. Hence, $B_1$ being cyclic implies that
$B_1\subseteq [B,B]$ or $B_1\subseteq M(B)$.  If $B_{1}\subseteq
[B,B]$, then $B_{1}$ is a normal subgroup of $(B,\cdot)$ because
$[B,B]$ is abelian and thus has a unique Sylow subgroup of order
$|B_1|$. Hence, $B_{1}$ is an ideal of $B$, a contradiction. On the
other hand, if $B_{1}\subseteq M(B )$ then $B_{2}\cdots B_{n}$ is a
subgroup normalized by $B_{1}$. Hence $B_{2}\cdots B_{n}$ is a
normal subgroup of $B$, hence an ideal of $B$, a contradiction
again.}
\end{remark}

\section{The main new construction} \label{construction}
In this section, using the construction of left braces introduced
in Section~\ref{agroup}, we shall construct a new family of simple
left braces, based on the existence of special elements of prime
order $p$ in the orthogonal group $\mathrm{O}(V,b)$ of a
non-singular symmetric bilinear form $b$ over a finite dimensional
$\Z/(q)$-vector space $V$, for a prime $q\neq p$. Using this
construction of simple left braces, we shall prove the main result
of this paper, Theorem~\ref{MainResult}.

To check the simplicity of a left brace the following lemma is
useful.

\begin{lemma}\label{ideal}
\cite[Lemma~2.5]{BCJO} If $I$ is an ideal of a left brace $B$, then
$(\lambda_b-\id)(a)\in I$, for all $a\in B$ and $b\in I$.
\end{lemma}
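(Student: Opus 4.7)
My plan is to unfold the definition of $\lambda_b$ and reduce the claim to a single intermediate membership. Since $\lambda_b(a)=ba-b$, we have $(\lambda_b-\id)(a)=ba-b-a$; because $b\in I$ and $I$ is an additive subgroup, it suffices to establish $ba-a\in I$. The rest of the argument should target this one statement, as it is exactly the point where the multiplicative and the additive content of the ideal hypothesis must be reconciled.

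The main step will be to bridge these two operations. Normality of $I$ in $(B,\cdot)$ supplies an element $i\in I$ with $ba=ai$ (namely $i=a^{-1}ba$), and the brace relation $a\cdot x=a+\lambda_a(x)$ rewrites $ai$ as $a+\lambda_a(i)$. The left-ideal property of $I$ then places $\lambda_a(i)$ inside $I$, giving $ba-a\in I$ as required. Combined with $b\in I$ and additive closure, this yields $(\lambda_b-\id)(a)\in I$.

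A more conceptual route passes through the quotient brace $B/I$: the canonical epimorphism $\pi$ is a brace homomorphism and hence intertwines the lambda actions, while the standard identity $1=0$ in any left brace (immediate from the brace axiom by setting $a=1$) forces $\lambda_0=\id$; since $\pi(b)=0$ for $b\in I$, the image of $(\lambda_b-\id)(a)$ vanishes in $B/I$, so the element itself lies in $I$. I do not anticipate any substantive obstacle; in both approaches the only content is that the three ingredients in the definition of an ideal (additive closure, stability under every $\lambda_c$, and multiplicative normality) assemble into a short identity, and the only point demanding a little care is recognising that the multiplicative coset equality $ba\in aI$ must be translated into an additive statement via the lambda identity.
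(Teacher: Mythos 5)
Your proof is correct. Note that the paper itself gives no argument for this lemma: it is quoted verbatim from \cite[Lemma~2.5]{BCJO}, so there is no in-paper proof to compare against; your write-up supplies a complete, self-contained verification. Your first route is the natural one and uses exactly the three ingredients of the ideal hypothesis in the way one would expect: writing $(\lambda_b-\id)(a)=ba-b-a$, reducing via additive closure to $ba-a\in I$, using normality to write $ba=ai$ with $i=a^{-1}ba\in I$, and then the identity $ai-a=\lambda_a(i)\in I$ from the left-ideal property. Each step checks out, including the tacit use of commutativity of $(B,+)$ to regroup the terms. Your second, quotient-theoretic route is also sound and arguably more conceptual: since ideals are precisely kernels of brace homomorphisms, the projection $\pi\colon B\to B/I$ satisfies $\pi(\lambda_b(a))=\lambda_{\pi(b)}(\pi(a))=\lambda_0(\pi(a))=\pi(a)$, where $\lambda_0=\id$ follows from $1=0$ (obtained by setting $a=1$ in the brace axiom, as you say); this buys brevity at the cost of invoking the quotient construction, whereas the direct computation makes visible exactly where normality and $\lambda$-invariance each enter.
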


First, we introduce and fix some notation. Let $n>1$ be an integer.
For $z\in \Z/(n)$, let $p_z$ be a prime number and let $r_{z}$  be a
positive integer. We assume that $p_{z}\neq p_{z'}$ for $z\neq z'$.

Let $m_z$ be positive integers such that $m_z \geq\max\{
r_{z},r_{z-1}\}$ for all $z\in \Z/(n)$. Let $V_z$ be a
$\Z/(p_z)$-vector space and let $b_z$  be a non-singular symmetric
bilinear form over $V_z$. Assume also that there exist elements $f_z
\in O(V_z, b_z)$ (the associated orthogonal group) of order
$p_{z-1}$.

For each $z\in \Z/(n)$, consider the trivial left braces
$T_z=V_z^{m_z}$ and $S_z=(\Z/(p_z))^{r_z}$. Note that $S_z$ also is
a $\Z/(p_z)$-vector space. The standard basis elements of
$(\Z/(p_z))^{r_z}$ we denote by $e_{1}^{(z)}, \dots ,
e_{r_{z}}^{(z)}$. Let $b'_z:T_z\times T_z\longrightarrow S_z$ be the
symmetric bilinear map defined by
$$b'_z((u_1,\dots,u_{m_z}),(v_1,\dots,v_{m_z}))=\sum_{i=1}^{r_z-1}b_z(u_i,v_i)e_{i}^{(z)}+\sum_{j=1}^{m_z}b_z(u_j,v_j)e_{r_z}^{(z)},$$
for all $(u_1,\dots,u_{m_z}),(v_1,\dots,v_{m_z})\in T_z$ (we agree
that the first sum is zero if $r_z=1$). Let
$f^{(z-1,z)}:S_{z-1}\longrightarrow \Aut(T_z,+)$ be the map
defined by $f^{(z-1,z)}(s)=f_s^{(z-1,z)}$ and
\begin{eqnarray*}\lefteqn{f_s^{(z-1,z)}(u_1,\dots,u_{m_z})}\\
&=&(f_{z}^{\mu_1+\mu_{r_{z-1}}}(u_1),\dots,
f_{z}^{\mu_{r_{z-1}-1}+\mu_{r_{z-1}}}(u_{r_{z-1}-1}),\\
&&\quad f_{z}^{\mu_{r_{z-1}}}(u_{r_{z-1}}),\dots,
f_{z}^{\mu_{r_{z-1}}}(u_{m_{z}})),
\end{eqnarray*}
for all $s=\sum_{i=1}^{r_{z-1}}\mu_ie_{i}^{(z-1)}\in S_{z-1}$ and
$(u_1,\dots,u_{m_z})\in T_z$  (we agree that if $r_{z-1}=1$, then
$f_s^{(z-1,z)}(u_1,\dots,u_{m_z})=(f_{z}^{\mu_{1}}(u_{1}),\dots,
f_{z}^{\mu_{1}}(u_{m_{z}}))$).

\begin{lemma}\label{orthof}
For each $z\in \Z/(n)$, the map $f^{(z-1,z)}:S_{z-1}\longrightarrow
\Aut(T_z,+)$ is a homomorphism of multiplicative groups and
$f_s^{(z-1,z)}\in \mathrm{O}(T_{z},b'_z)$, for all $s\in S_{z-1}$.
\end{lemma}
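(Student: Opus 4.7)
The plan is to verify the two claims by direct coordinate-wise computation, each reducing immediately to the hypothesis that $f_z\in\mathrm{O}(V_z,b_z)$ has order $p_{z-1}$.

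For the homomorphism property, take $s=\sum_{i=1}^{r_{z-1}}\mu_i e_i^{(z-1)}$ and $s'=\sum_{i=1}^{r_{z-1}}\nu_i e_i^{(z-1)}$ in $S_{z-1}$. Since $S_{z-1}$ is a trivial left brace, its multiplication coincides with vector addition, so $s\cdot s'=\sum_i(\mu_i+\nu_i)e_i^{(z-1)}$. I would then compute, for an arbitrary $(u_1,\dots,u_{m_z})\in T_z$, the $i$-th coordinate of $f^{(z-1,z)}_{s\cdot s'}(u_1,\dots,u_{m_z})$ straight from the definition and check that it equals the $i$-th coordinate of $f^{(z-1,z)}_s\circ f^{(z-1,z)}_{s'}(u_1,\dots,u_{m_z})$. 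For $i<r_{z-1}$ both expressions yield $f_z^{(\mu_i+\nu_i)+(\mu_{r_{z-1}}+\nu_{r_{z-1}})}(u_i)$, and for $i\geq r_{z-1}$ both yield $f_z^{\mu_{r_{z-1}}+\nu_{r_{z-1}}}(u_i)$. Well-definedness of the exponents modulo $p_{z-1}$ is exactly the order hypothesis on $f_z$.

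For the orthogonality claim, I would fix $s=\sum_i\mu_i e_i^{(z-1)}\in S_{z-1}$ and $\mathbf{u}=(u_1,\dots,u_{m_z}),\mathbf{v}=(v_1,\dots,v_{m_z})\in T_z$, and expand $b'_z(f^{(z-1,z)}_s(\mathbf{u}),f^{(z-1,z)}_s(\mathbf{v}))$ using the formula for $b'_z$. The crucial observation is that in each summand $b_z(\cdot,\cdot)$ the two arguments are acted on by the \emph{same} power of $f_z$: namely $f_z^{\mu_i+\mu_{r_{z-1}}}$ in coordinate $i<r_{z-1}$ and $f_z^{\mu_{r_{z-1}}}$ in coordinate $i\geq r_{z-1}$. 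Since $f_z\in\mathrm{O}(V_z,b_z)$, every power $f_z^k$ also preserves $b_z$, so each summand $b_z(f_z^{k}(u_i),f_z^{k}(v_i))e^{(z)}_{\ast}$ collapses to $b_z(u_i,v_i)e^{(z)}_{\ast}$, and summing restores $b'_z(\mathbf{u},\mathbf{v})$.

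Neither step presents any real obstacle; the only bookkeeping to get right is the asymmetric role played by the last scalar $\mu_{r_{z-1}}$, which appears in every coordinate of $f^{(z-1,z)}_s(\mathbf{u})$. This asymmetry is irrelevant for both verifications: in the homomorphism check the exponents add coordinate by coordinate, and in the orthogonality check the matching exponents on $u_i$ and $v_i$ cancel by $\mathrm{O}(V_z,b_z)$-invariance. I also need to note the implicit input $m_z\geq r_{z-1}$ from the standing assumption, which ensures the formula for $f^{(z-1,z)}_s(\mathbf{u})$ actually makes sense.
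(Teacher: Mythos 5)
Your proposal is correct and follows essentially the same route as the paper: the homomorphism property reduces to the fact that the trivial brace $S_{z-1}$ has multiplication equal to addition (so exponents of $f_z$ add, well-defined modulo the order $p_{z-1}$), and orthogonality reduces to each summand of $b'_z$ having both arguments acted on by the same power of $f_z$, which preserves $b_z$. Your explicit remarks about the asymmetric role of $\mu_{r_{z-1}}$ and the standing hypothesis $m_z\geq r_{z-1}$ are details the paper leaves implicit, but they do not change the argument.
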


\begin{proof} Recall that $S_{z-1}$ is a trivial left brace. Thus
the addition and the multiplication of $S_{z-1}$ coincide. Hence it
is clear that $f^{(z-1,z)}:S_{z-1}\longrightarrow \Aut(T_z,+)$ is a
homomorphism of multiplicative groups. Let
$s=\sum_{i=1}^{r_{z-1}}\mu_ie_{i}^{(z-1)}\in S_{z-1}$. By
assumption, $f_z \in O(V_z, b_z)$. Hence we have
\begin{eqnarray*}
\lefteqn{b'_z(f_s^{(z-1,z)}(u_1,\dots,u_{m_z}),f_s^{(z-1,z)}(v_1,\dots,v_{m_z}))}\\
&=&b'_z ((f_{z}^{\mu_1+\mu_{r_{z-1}}}(u_1),\dots,
f_{z}^{\mu_{r_{z-1}-1}+\mu_{r_{z-1}}}(u_{r_{z-1}-1}),\\
&&\quad f_{z}^{\mu_{r_{z-1}}}(u_{r_{z-1}}),\dots,
f_{z}^{\mu_{r_{z-1}}}(u_{m_{z}})),\\
&&\qquad
(f_{z}^{\mu_1+\mu_{r_{z-1}}}(v_1),\dots,
f_{z}^{\mu_{r_{z-1}-1}+\mu_{r_{z-1}}}(v_{r_{z-1}-1}),\\
&&\quad f_{z}^{\mu_{r_{z-1}}}(v_{r_{z-1}}),\dots,
f_{z}^{\mu_{r_{z-1}}}(v_{m_{z}})))\\
&=&\sum_{i=1}^{r_z-1}b_z(u_i,v_i)e_i^{(z)}+\sum_{j=1}^{m_z}b_z(u_j,v_j)e_{r_z}^{(z)}\\
&=&b'_z((u_1,\dots, u_{m_z}),(v_1,\dots, v_{m_z})),
\end{eqnarray*}
where the second equality holds because $f_z\in
\mathrm{O}(V_z,b_z)$. Thus the result follows.
\end{proof}

Let $T=T_1\times \cdots\times T_n$ and $S=S_1\times\cdots\times S_n$
be the direct  products of the left braces $T_z$'s and $S_z$'s
respectively. Let $b\colon T\times T\longrightarrow S$ be the
symmetric bilinear map defined by
$$b((t_1,\dots ,t_n),(t'_1,\dots ,t'_n))=(b'_1(t_1,t'_1),\dots ,b'_n(t_n,t'_n)),$$
for all $(t_1,\dots ,t_n),(t'_1,\dots ,t'_n)\in T$. Let
$f^{(z,z')}\colon S_z\longrightarrow \Aut(T_z,+)$ be the trivial
homomorphism for all $z'\neq z+1$.  Note that the homomorphisms
$f^{(z,z')}$ satisfy (\ref{commute}). Let $\alpha\colon
(S,\cdot)\longrightarrow \Aut(T,+)$ be the map defined by
$\alpha(s_1,\dots,s_n)=\alpha_{(s_1,\dots,s_n)}$ and
\begin{eqnarray*}
\alpha_{(s_1,\dots,s_n)}(t_1,\dots ,t_n)&=&(f^{(1,1)}_{s_1}\cdots
f^{(n,1)}_{s_n}(t_1),\dots ,f^{(1,n)}_{s_1}\cdots
f^{(n,n)}_{s_n}(t_n))\\
&=&(f^{(n,1)}_{s_n}(t_1),f^{(1,2)}_{s_1}(t_2),\dots
,f^{(n-1,n)}_{s_{n-1}}(t_n)),
\end{eqnarray*}
for all $(s_1,\dots,s_n)\in S$ and $(t_1, \dots, t_n)\in T$. By
Lemma~\ref{orthof}, and Lemma~\ref{bilinear}, we have that
$\alpha_{(s_1,\dots,s_n)}\in \mathrm{O}(T,b)$, for all
$(s_1,\dots,s_n)\in S$ and we can construct the asymmetric product
$T\rtimes_{\circ} S$ of $T$ by $S$ via $b$ and $\alpha$.

For $z\in \Z/(n)$, let $A_z=\{ ((t_1,\dots ,t_n),(s_1,\dots
,s_n))\in T\rtimes_{\circ}S\mid t_{z'}=0 \text{ and }s_{z'}=0 \text{
for all }z'\neq z \}$. By Proposition~\ref{construction1},  $A_z$ is
a left ideal of $T\rtimes_{\circ}S$, in fact it is the Sylow
$p_z$-subgroup of the additive group of $T\rtimes_{\circ}S$.

\begin{theorem} \label{simple0}
The asymmetric product $T\rtimes_{\circ} S$ is a simple left brace
if and only  if $f_{z}-\id$ is bijective for all $z\in \Z/(n)$.
\end{theorem}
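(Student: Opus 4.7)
The plan is to prove both directions of the biconditional separately.

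For the ``only if'' direction I would argue contrapositively: if some $f_{z_0}-\id$ fails to be bijective, I would exhibit a proper non-zero ideal. Since $f_{z_0}$ has order $p_{z_0-1}$ coprime to $p_{z_0}$, Maschke's theorem gives a $\langle f_{z_0}\rangle$-stable decomposition $V_{z_0}=\ker(f_{z_0}-\id)\oplus W$ with $W=\im(f_{z_0}-\id)$ a proper non-zero subspace of $V_{z_0}$. I would then take
$$J=\{((t_1,\dots,t_n),(s_1,\dots,s_n))\in T\rtimes_{\circ}S : t_{z_0}\in W^{m_{z_0}}\}$$
and verify that $J$ is a proper non-zero ideal of $B:=T\rtimes_{\circ}S$. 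Additive closure and $\lambda$-invariance are immediate from the $\langle f_{z_0}\rangle$-invariance of $W$. For multiplicative normality one computes $(t',s')(t,s)(t',s')^{-1}=(t'+\alpha_{s'}(t)-\alpha_s(t'),s)$ and uses the inclusion $(\id-f_{z_0}^{\nu})(V_{z_0})\subseteq W$ for every $\nu$, which follows from the factorisation $\id-f_{z_0}^{\nu}=-(f_{z_0}-\id)(\id+f_{z_0}+\dots+f_{z_0}^{\nu-1})$; hence the $z_0$-coordinate of the conjugate remains in $W^{m_{z_0}}$.

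For the ``if'' direction, assume every $f_z-\id$ is bijective and let $I\ne 0$ be an ideal of $B$. Two identities drive the argument: $\lambda_{(0,s)}(t',s')=(\alpha_s(t'),s')$, and the multiplicative-conjugation formula $(t',s')(0,s)(t',s')^{-1}=(t'-\alpha_s(t'),s)$. Subtracting $(0,s)$ from the conjugate inside the brace yields $(t'-\alpha_s(t'),0)\in I$ whenever $(0,s)\in I$. Setting $I_T:=I\cap(T\times\{0\})$, the goal is $I_T=T\times\{0\}$; once this is established, Lemma~\ref{ideal} applied to $(t,0)\in I$ paired with $(t',s')\in B$ produces $(0,-b(t,t'))\in I$, and surjectivity of $b$ (each $b'_z$ is surjective thanks to non-singularity of $b_z$ and the hypothesis $m_z\ge r_z$) gives $\{0\}\times S\subseteq I$, hence $I=B$ since every element of $B$ decomposes as $(t,0)+(0,s)$.

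To reach $I_T=T\times\{0\}$ I proceed in two stages. First, using the additive Sylow decomposition $I=\bigoplus_z(I\cap A_z)$, pick a non-zero $x=(t,s)\in I\cap A_{z_0}$. If $t=0$ take $s^*:=s$; otherwise apply Lemma~\ref{ideal} to $x$ and a test element $(t',0)$ with $t'_{z_0+1}=0$: a short computation collapses $(\lambda_x-\id)((t',0))$ to $(0,(0,\dots,-b'_{z_0}(t_{z_0},t'_{z_0}),\dots,0))$, and an analysis of the image of $b'_{z_0}(t_{z_0},\cdot)$ (using non-singularity of $b_{z_0}$ and $m_{z_0}\ge r_{z_0}$) shows that the $e_{r_{z_0}}^{(z_0)}$-coefficient of the resulting $s^*$ can always be arranged non-zero. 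Second, from $(0,s^*)\in I$ one obtains $(t'-\alpha_{s^*}(t'),0)\in I$ for every $t'$; since the non-zero $e_{r_{z_0}}^{(z_0)}$-coefficient forces every exponent $\nu_j$ (for $j\ge r_{z_0}$) acting on $T_{z_0+1}$ to be non-zero modulo $p_{z_0}$, and since $f_{z_0+1}^{\nu}$ generates $\langle f_{z_0+1}\rangle$ whenever $\nu\not\equiv 0\pmod{p_{z_0}}$ (so $f_{z_0+1}^{\nu}-\id$ is bijective by assumption), full copies of $V_{z_0+1}$ sit in each such slot of $(I_T)_{z_0+1}$. The hypothesis $m_{z_0+1}\ge\max(r_{z_0},r_{z_0+1})$ ensures slot $m_{z_0+1}\ge r_{z_0+1}$ is available, so I take $t\in(I_T)_{z_0+1}$ supported only there and repeat the extraction to obtain $(0,\mu e_{r_{z_0+1}}^{(z_0+1)})\in I$ with $\mu\ne 0$; all exponents acting on $T_{z_0+2}$ then equal $\mu$, so bijectivity of $f_{z_0+2}-\id$ yields $T_{z_0+2}\subseteq I_T$ in full. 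Iterating around $\Z/(n)$ produces $T_z\subseteq I_T$ for every $z$, completing the argument.

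The step I expect to be most delicate is the image analysis in the first stage: when $t_{z_0}$ is supported only on the first $r_{z_0}-1$ slots, the image of $b'_{z_0}(t_{z_0},\cdot)$ lies in the span of the vectors $e_l^{(z_0)}+e_{r_{z_0}}^{(z_0)}$ rather than containing $e_{r_{z_0}}^{(z_0)}$ itself, and in the borderline case $p_{z_0}=2$ the ensuing $\nu$-exponent analysis must avoid a collision at $0\pmod 2$; the inequality $m_{z+1}\ge r_z$ is precisely what forbids this collision and keeps the iteration going cleanly around $\Z/(n)$.
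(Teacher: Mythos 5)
Your proposal follows the paper's own route almost exactly: the same ideal built from $\im(f_{z_0}-\id)$ for the necessity direction, and for sufficiency the same scheme of splitting $I$ along the additive Sylow subgroups $A_z$, extracting elements of $\{0\}\times S$ supported at $z_0$ via the bilinear map, and propagating around $\Z/(n)$ using bijectivity of the $f_z-\id$ (your first-stage extraction combines the paper's Claims 1 and 2, and your ``delicate'' $e_l^{(z_0)}+e_{r_{z_0}}^{(z_0)}$, $p_{z_0}=2$ discussion is the paper's Case 2). The computations you cite (the conjugation formula, $\lambda_{(0,s)}(t',s')=(\alpha_s(t'),s')$, the collapse of $(\lambda_x-\id)(t',0)$ to $(0,-b(t,t'))$, surjectivity of $b$, and the reduction of $f_z^{\nu}-\id$ bijective to $f_z-\id$ bijective for $\nu\not\equiv 0$) all check out.

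There is, however, a genuine gap in your case analysis: the case where the chosen nonzero $x=(t,s)\in I\cap A_{z_0}$ has $t=0$ \emph{and} the $e_{r_{z_0}}^{(z_0)}$-coefficient $\mu_{r_{z_0}}$ of $s$ vanishes. You dispose of $t=0$ by setting $s^*:=s$, but your second stage opens with ``since the non-zero $e_{r_{z_0}}^{(z_0)}$-coefficient forces every exponent $\nu_j$ (for $j\ge r_{z_0}$) \dots to be non-zero'' --- a hypothesis that now fails: by the definition of $f^{(z_0,z_0+1)}$, the exponent acting on every slot $j\ge r_{z_0}$ of $T_{z_0+1}$ is exactly $\mu_{r_{z_0}}=0$, so $t'-\alpha_{s^*}(t')$ vanishes in all those slots, and in particular no copy of $V_{z_0+1}$ appears in slot $m_{z_0+1}$, which is the slot your iteration requires. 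You cannot escape by choosing a different $x$: a priori $I\cap A_{z_0}$ could consist entirely of such elements (say, be spanned by $(0,e_{1}^{(z_0)})$ with $r_{z_0}\ge 2$); ruling this out is part of what the theorem asserts. This is precisely the situation handled by Claim 3 in the paper's proof. The repair stays within your own machinery: if $\mu_{r_{z_0}}=0$ but $\mu_i\neq 0$ for some $i<r_{z_0}$, then the exponent on slot $i$ is $\mu_i+\mu_{r_{z_0}}=\mu_i\neq 0$, so a full copy of $V_{z_0+1}$ sits in slot $i$ of $(I_T)_{z_0+1}$; extracting from an element supported only in that slot (note $i<r_{z_0}\le m_{z_0+1}$) yields $(0,c(e_i^{(z_0+1)}+e_{r_{z_0+1}}^{(z_0+1)}))\in I$ or $(0,c\,e_{r_{z_0+1}}^{(z_0+1)})\in I$ with $c\neq 0$, according to whether $i<r_{z_0+1}$ or $i\ge r_{z_0+1}$; either way the $e_{r_{z_0+1}}^{(z_0+1)}$-coefficient is non-zero and your iteration restarts cleanly at level $z_0+1$. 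Without this extra case your induction never gets started when $I\cap A_{z_0}$ has trivial $T$-part and misses the last basis vector.
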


\begin{proof}
To prove the necessity of the stated condition it is sufficient
(and easy) to check that $J=\{ ((t_1,\dots ,t_n),(s_1,\dots
,s_n))\in T\rtimes_{\circ} S \mid t_z =(u_{1}, \dots , u_{m_z})$
such that $u_{i} \in \text{Im}(f_{z}-\id)  \}$ is an ideal of
$T\rtimes_{\circ} S$.

For the converse, suppose that  $f_{z}-\id$ is bijective for all
$z\in \Z/(n)$. Let $I$ be a nonzero ideal of $T\rtimes_{\circ}S$ and
let $0\neq ((t_1,\dots ,t_n),(s_1,\dots ,s_n))\in I$.  Since the
$p_{z}$'s are distinct prime numbers, we may assume, without loss of
generality, that for some $k\in \Z/(n)$ we have $(t_k,s_k)\neq 0$
and $(t_z,s_z)=0$ for all $z\neq k$. Write $t_{k}=( u_{1}, \dots,
u_{m_{k}})$ and $s_k= \sum_{i=1}^{r_{k}} \mu_{i} e_{i}^{(k)}$. Thus
$((t_1,\dots ,t_n),(s_1,\dots ,s_n))\in I\cap A_k$.

For $z\in \Z/(n)$, let $a_{z,i}$ be the element in $A_{z}$
defined by
 $$a_{z,i} = ((0,\dots,0),(s'_1,\dots ,s'_n)),$$
with $s'_z=e_{i}^{(z)}$.

Note that, for all $t,t'\in T$ and all $s,s'\in S$,
$$(t,
s)-(t', s')=(t-t', s-s'-b(t-t',t'))$$ by the definition of the sum
in the asymmetric product $T\rtimes_{\circ} S$.  Throughout the
proof we will use this formula several times.

 {\it Claim 1}: If there exists a positive integer $j$ such
that $r_{k} \leq j \leq m_{k}$ and $u_{j}\neq 0$, then
  $a_{k,r_{k}}\in I$.\\
Indeed, since, by assumption,  $b_{k}$ is non-singular, there
exists $u\in V_{k}$ such that $b_{k}(u,u_j)\neq 0$. Let $c_{k}\in
A_{k}$ be the element
 $$c_{k} = ((t'_1,\dots,t'_n),(0,\dots ,0)),$$
with $t'_k=(u'_{1}, \dots , u'_{m_{k}})$  such that $u'_{j}=u$ and
$u'_{l} =0$ for all $l\neq j$. Then,
\begin{eqnarray*}
\lefteqn{\lambda_{c_k}((t_1,\dots ,t_n),(s_1,\dots ,s_n))}\\
&=&((t'_1,\dots,t'_n),(0,\dots,0))((t_1,\dots,t_n),(s_1,\dots,s_n))\\
&&-((t'_1,\dots,t'_n),(0,\dots,0))\\
&=&((t'_1+t_1,\dots,t'_n+t_n),(s_1,\dots,s_n))-((t'_1,\dots,t'_n),(0,\dots,0))\\
&=&((t_1,\dots,t_n),(s_1,\dots,s_n)-b((t'_1,\dots,t'_n),(t_1,\dots,t_n)))\\
&=&((t_1,\dots,t_n),(s_1-b'_1(t'_1,t_1),\dots,s_n-b'_n(t'_n,t_n)))\in
I\cap A_k,
 \end{eqnarray*}
with
$$s_k-b'_k(t'_k,t_k)
=s_k -b_{k}(u,u_{j})
  e_{r_{k}}^{(k)},
$$
(recall that $(t_z,s_z-b'_z(t'_z,t_z)) =0$ for all $z\neq k$).
Now,
\begin{eqnarray*}
\lefteqn{((t_1,\dots ,t_n),(s_1,\dots ,s_n))}\\
&&-((t_1,\dots
,t_n),(s_1-b'_1(t'_1,t_1),\dots,s_n-b'_n(t'_n,t_n)))\\
&=&((0,\dots ,0),(b'_1(t'_1,t_1),\dots ,b'_n(t'_n,t_n)))\in I\cap
A_k,
\end{eqnarray*}
with $b'_1(t'_k,t_k)= b_{k}(u,u_{j}) e_{r_{k}}^{(k)}$. Thus
$a_{k,r_k}\in I$ and the claim follows.

{\it Claim 2}: If $u_j\neq 0$ for some $j$ with $1\leq j < r_{k}$,
then  $a_{k,j} +a_{k,r_{k}}\in I$.\\
As in the proof of Claim 1, we see that
\begin{eqnarray*}
\lefteqn{((t_1,\dots ,t_n),(s_1,\dots
,s_n))-\lambda_{c_k}((t_1,\dots
,t_n),(s_1,\dots ,s_n))}\\
&=&((0,\dots ,0),(b'_1(t'_1,t_1),\dots ,b'_n(t'_n,t_n)))\in I\cap
A_k,
\end{eqnarray*}
with $b'_k(t'_k,t_k)= b_{k}(u,u_{j})(e_{j}^{(k)}
+e_{r_{k}}^{(k)})$ and $b_{k}(u,u_{j})\neq 0$. Thus
$a_{k,j}+a_{k,r_k}\in I$ and the claim follows.

{\it Claim 3}: If $\mu_{r_k}=0$ and $\mu_{i}\neq 0$ for some $i$
with $1\leq i <r_{k}$ then $c_{k+1}\in I$ for some  $c_{k+1}\in
A_{k+1}$ such that $c_{k+1} =((t'_1,\dots,
t'_n),(s'_1,\dots,s'_n))$, $t'_{k+1}=(v_{1}, \dots , v_{m_{k}+1})$,
 and some $v_{l}\neq 0$.\\
Indeed,  since $f_{k+1}$ has order $p_k$ and $\mu_i\neq 0$, there
exists $v\in V_{k+1}$ such that $f_{k+1}^{\mu_{i}}(v)\neq
 v$. Let
  $$c'_{k+1} =((t''_1,\dots ,t''_n),(0,\dots,0))\in A_{k+1}$$
 be the element with $t''_{k+1}=(v'_{1}, \dots , v'_{m_{k}+1})$
 such that $v'_{i} =v$ and $v'_{l} =0$ for all $l\neq i$.
 By Lemma~\ref{ideal}, we have that
 \begin{eqnarray*}
 \lefteqn{(\lambda_{((t_1,\dots,t_n),(s_1, \dots , s_n))}-\id)(c'_{k+1})}\\
 &=&((t_1,\dots,t_n),(s_1, \dots , s_n))((t''_1,\dots ,t''_n),(0,\dots,0))\\
 &&-((t_1,\dots,t_n),(s_1, \dots , s_n))-((t''_1,\dots
 ,t''_n),(0,\dots,0))\\
 &=&((t_1+f_{s_n}^{(n,1)}(t''_1),\dots,t_n+f_{s_{n-1}}^{(n-1,n)}(t''_n)),(s_1, \dots , s_n))\\
 &&-((t_1+t''_1,\dots,t_n+t''_n),(s_1+b'_1(t_1,t''_1), \dots , s_n+b'_n(t_n,t''_n)))\\
 &=&((f_{s_n}^{(n,1)}(t''_1)-t''_1,\dots,f_{s_{n-1}}^{(n-1,n)}(t''_n)-t''_n),\\
 &&\quad (-b'_1(t_1,t''_1)-b'_1(f_{s_n}^{(n,1)}(t''_1)-t''_1,t_1+t''_1),\\
&&\quad  \dots ,
-b'_n(t_n,t''_n)-b'_n(f_{s_{n-1}}^{(n-1,n)}(t''_n)-t''_n,t_n+t''_n)))\in
 I\cap A_{k+1}
 \end{eqnarray*}
 and
 $$f_{s_{k}}^{(k,k+1)} (t''_{k+1}) -t''_{k+1} =
  (0,\dots , 0, f_{k+1}^{\mu_{i}} (v) -v, 0, \dots , 0).$$
Hence Claim 3 follows.

{\it Claim 4}: If $\mu_{r_{k}}\neq 0$ then $c_{k+1}\in I$ for some
$c_{k+1}\in A_{k+1}$ such that
$c_{k+1}=((t'_1,\dots, t'_n),(s'_1,\dots,s'_n))$, $t'_{k+1}=(v_{1}, \dots , v_{m_{k+1}})$,
 and some $v_{l}\neq 0$.\\
As in the proof of Claim 3, there exists $v\in V_{k+1}$ such that
$f_{k+1}^{\mu_{r_k}}(v)\neq v$. Let
  $$c'_{k+1} =((t''_1,\dots ,t''_n),(0,\dots, 0))\in A_{k+1}$$
 be the element with $t''_{k+1}=(v'_{1}, \dots , v'_{m_{k}+1})$
 such that $v'_{r_k} =v$ and $v'_{l} =0$ for all $l\neq r_k$.
 As in the proof of Claim 3, we have that
 \begin{eqnarray*}
 \lefteqn{(\lambda_{((t_1,\dots,t_n),(s_1, \dots , s_n))}-\id)(c'_{k+1})}\\
 &=&((f_{s_n}^{(n,1)}(t''_1)-t''_1,\dots,f_{s_{n-1}}^{(n-1,n)}(t''_n)-t''_n),\\
 &&\quad (-b'_1(t_1,t''_1)-b'_1(f_{s_n}^{(n,1)}(t''_1)-t''_1,t_1+t''_1),\\
&&\quad  \dots ,
-b'_n(t_n,t''_n)-b'_n(f_{s_{n-1}}^{(n-1,n)}(t''_n)-t''_n,t_n+t''_n)))\in
 I\cap A_{k+1}
 \end{eqnarray*}
 and
 $$f_{s_{k}}^{(k,k+1)} (t''_{k+1}) -t''_{k+1} =
  (0,\dots , 0, f_{k+1}^{\mu_{r_k}} (v) -v, 0, \dots , 0).$$
Hence Claim 4 follows.

We are now in a position to prove the sufficiency of the
conditions. By Claims 1, 2, 3 and 4,  without loss of generality,
we may assume that either
 $a_{k,r_{k}}\in I$ or $a_{k,j}+a_{k,r_{k}}\in I$
for some $1\leq j < r_{k}$.

Suppose first that $a_{k,r_{k}} \in I$. Then by Lemma~\ref{ideal},
for all $t'_{k+1} =(v_1, \dots , v_{m_{k+1}})\in T_{k+1}$, we get
that
\begin{eqnarray*}
\lefteqn{(\lambda_{a_{k,r_{k}}}-\id) ((0,\dots , 0, t'_{k+1}, 0,\dots , 0),(0,\dots , 0))}\\
 &=&
 ((0,\dots, 0, f_{e_{r_{k}}^{(k)}}^{(k,k+1)}(t'_{k+1})-t'_{k+1}, 0, \dots , 0),(0,\dots,s'_{k+1},0,\dots, 0))\in
 I\cap A_{k+1}
\end{eqnarray*}
for some $s'_{k+1}\in S_{k+1}$ and
$$
f_{e_{r_{k}}^{(k)}}^{(k,k+1)}(t'_{k+1}) -t'_{k+1} =
((f_{k+1}-\id)(v_{1}), \dots , (f_{k+1}-\id)(v_{m_{k+1}})).
$$
 Since, by assumption, $f_{k+1} -\id$ is bijective, by Claim
1 and Claim 2, we get that
 $$
 a_{k+1,r_{k+1}},\;  a_{k+1,j}+ a_{k+1,r_{k+1}} \in I
 $$
for all $1\leq j < r_{k+1}$.  Thus, all $a_{k+1,j}\in I\cap
A_{k+1}$ (for $1\leq j \leq r_{k+1}$). Consequently,
$$((0,\dots, 0),(0,\dots,0,s'_{k+1},0,\dots ,0))\in I\cap
A_{k+1}.$$ Therefore
\begin{eqnarray*} \lefteqn{((0,\dots, 0,
f_{e_{r_{k}}^{(k)}}^{(k,k+1)}(t'_{k+1})-t'_{k+1}, 0, \dots ,
0),(0,\dots, s'_{k+1},0,\dots, 0))}\\
&&-((0,\dots, 0),(0,\dots,0,s'_{k+1},0,\dots ,0))\\
&=& ((0,\dots, 0, f_{e_{r_{k}}^{(k)}}^{(k,k+1)}(t'_{k+1})-t'_{k+1},
0, \dots , 0),(0,\dots,0))\in I\cap A_{k+1}.
\end{eqnarray*}
Since $$ f_{e_{r_{k}}^{(k)}}^{(k,k+1)}(t'_{k+1}) -t'_{k+1} =
((f_{k+1}-\id)(v_{1}), \dots , (f_{k+1}-\id)(v_{m_{k+1}}))
$$
and  $f_{k+1} -\id$ is bijective,  in this case,
 $$ A_{k+1}\subseteq I.$$

 Now, consider the second case, namely suppose that $a_{k,j}+a_{k,r_{k}}\in I$ for some $1\leq j < r_{k}$.
 Then by Lemma~\ref{ideal},  for all $t'_{k+1} =(v_1, \dots , v_{m_{k+1}})\in T_{k+1}$,
 \begin{eqnarray*}
 \lefteqn{
 (\lambda_{a_{k,j}+a_{k,r_{k}}}-\id) ((0,\dots , 0,t'_{k+1}, 0,\dots , 0),(0,\dots, 0))}\\
 &=&
 ((0,\dots , 0, f_{{e_j}^{(k)}+e_{r_{k}}^{(k)}}^{(k,k+1)} (t'_{k+1})-t'_{k+1}, 0,\dots, 0),(0,\dots ,0,s'_{k+1},0,\dots, 0))\in
 I\cap A_{k+1}
 \end{eqnarray*}
for some $s'_{k+1}\in S_{k+1}$ and
\begin{eqnarray*}
\lefteqn{f_{{e_j}^{(k)}+e_{r_{k}}^{(k)}}^{(k,k+1)} (t'_{k+1})-t'_{k+1} }\\
&=&
( (f_{k+1} -\id)(v_{1}), \dots ,  (f_{k+1} -\id)(v_{j-1}),  (f^{2}_{k+1} -\id)(v_{j}),  (f_{k+1} -\id)(v_{j+1}),\\
&&
 \dots ,  (f_{k+1} -\id)(v_{m_{k+1}})).
\end{eqnarray*} Since $m_{k+1}\geq r_k>j$ and $f_{k+1}\neq\id$, there exists
$v_{m_{k+1}}\in V_{k+1}$ such that $(f_{k+1}
-\id)(v_{m_{k+1}})\neq 0$, thus Claim 1 yields that
 $$
 a_{k+1,r_{k+1}}\in I.
 $$
 Therefore,  applying the previous case, we get that $A_{k+2}\subseteq I$.

Now,  combining the above two cases we clearly get that
$A_{z}\subseteq I$, for all $z\in \Z/(n)$. So,
$I=T\rtimes_{\circ}S$, as desired.
\end{proof}

We now prove the main result of the paper, Theorem~\ref{MainResult}.
\bigskip

\begin{proofof} {\bf (of Theorem~\ref{MainResult}).}
For each $z\in \Z/(n)$ let $V_z=(\Z/(p_z))^{2(p_{z-1}-1)}$ and let
$b_z\colon V_z\times V_z\longrightarrow \Z/(p_z)$ be the symmetric
bilinear form with associated matrix
$$\left(
\begin{array}{c|c}
0&I_{p_{z-1}-1}\\
\hline
I_{p_{z-1}-1}&0
\end{array}\right)$$
with respect the standard basis of $V_z$, where $I_m$ denotes the
$m\times m$ identity matrix. Let $C_z$ be the companion matrix of
the polynomial $x^{p_{z-1}-1}+x^{p_{z-1}-2}+\cdots +x+1\in
\Z/(p_z)[x]$, that is
$$C_z=\left(
\begin{array}{ccccc}
0&0&\ldots&0&-1\\
1&0&\ldots&0&-1\\
0&\ddots&\ddots&\vdots&\vdots\\
\vdots&\ddots&\ddots&0&-1\\
0&\ldots&0&1&-1
\end{array}\right)\in GL_{p_{z-1}-1}(\Z/(p_z)).$$
Let $f_z$ be the automorphism of $V_z$ with associated matrix
$$\left(
\begin{array}{c|c}
C_z&0\\
\hline 0&(C_z^{-1})^t
\end{array}\right).$$
 (For a matrix $A$, we denote by $A^t$ the transpose of $A$.)
Note that $f_z$ has order $p_{z-1}$ and
\begin{eqnarray*}\lefteqn{\left(
\begin{array}{c|c}
0&I_{p_{z-1}-1}\\
\hline I_{p_{z-1}-1}&0
\end{array}\right)}\\
&=& \left(
\begin{array}{c|c}
C_z&0\\
\hline 0&(C_z^{-1})^t
\end{array}\right)^t\left(
\begin{array}{c|c}
0&I_{p_{z-1}-1}\\
\hline I_{p_{z-1}-1}&0
\end{array}\right)\left(
\begin{array}{c|c}
C_z&0\\
\hline 0&(C_z^{-1})^t
\end{array}\right)
.
\end{eqnarray*}
Hence $f_z\in \mathrm{O}(V_z,b_z)$ and $f_z-\id$ is bijective.

 Let $n'_z =\max\{\dim(V_z),\dim(V_{z-1})\}$ and let $l_{z}
=n'_z (\dim(V_z)+1)$. Let $m_z\geq l_z$. Write $m_z
=m'_{z}\dim(V_z)+ r_z$ with $0< r_z \leq \dim(V_z)$ and a positive
integer $m'_{z}$. Note that $m_z\geq l_z=n'_z(\dim(V_z)+1)=
n'_z\dim(V_z)+n'_z\geq n'_z\dim(V_z)+r_z$. Hence $m'_z\geq n'_z\geq
\max\{ r_z,r_{z-1}\}$. Let $T_z =V_{z}^{m'_{z}}$ and
$S_z=(\Z/(p_{z}))^{r_{z}}$. Let $T=T_1\times \cdots\times T_n$ and
$S=S_1\times \cdots\times S_n$. Then, using Theorem~\ref{simple0}
and Proposition~\ref{construction1}, one can construct a simple left
brace $T\rtimes_{\circ}S$ that has a metabelian multiplicative group
with abelian Sylow subgroups, and
$$|T\rtimes_{\circ}S|=p_1^{m'_1\dim(V_1)+r_1}p_2^{m'_2\dim(V_2)+r_2}\cdots p_n^{m'_n\dim(V_n)+r_n}=p_1^{m_1}p_2^{m_2}\cdots p_n^{m_n}.$$
Therefore the result follows.
\end{proofof}

\begin{remark}\label{dimension}{\rm
In the proof of the main result (Theorem~\ref{MainResult}), we have
seen that the positive integers $l_1,\dots, l_n$ of the statement
can be defined as
$$l_z=\max\{\dim(V_z),\dim(V_{z-1})\}(\dim(V_z)+1),$$
for all $z\in\Z/(n)$, where $V_z$ is a finite dimensional
$\Z/(p_z)$-vector space with a non-singular symmetric bilinear form
$b_z$ and $f_z\in \mathrm{O}(V_z,b_z)$ of order $p_{z-1}$ such that
$f_z-\id$ is bijective.}
\end{remark}

\section{Search for the best bounds}\label{bounds}

In this section we study the  minimal possible orders of finite
simple left braces with multiplicative $A$-group that  can arise
from the construction in Section 3.

In the construction of simple left braces  in  Section 3 we use, for
each pair of distinct primes $p$ and $q$, a $\Z/(p)$-vector space
$V$ with a non-singular symmetric bilinear form $b$ and an element
$f\in \mathrm{O}(V,b)$ of order $q$ such that $f-\id$ is bijective.
By Remark~\ref{dimension}, in order to discover sharp lower bounds
for the numbers $l_z$ used in the main result
Theorem~\ref{MainResult},  a natural question is to determine the
minimal dimension of such a vector space $V$.

Note that for $q=2$, we can take $V=\Z/(p)$ and the symmetric
bilinear form $b\colon V\times V\longrightarrow \Z/(p)$ defined by
$b(a,b)=ab$, for all $a,b\in\Z/(p)$. In this case, the element $f\in
\mathrm{O}(V,b)$ defined by $f(a)=-a$, for all $a\in \Z/(p)$, has
order $2$ and $f-\id$ is bijective. Thus the minimal dimension, in
this case, is $1$.

 Another motivation for this study is the following problem
stated in \cite[Problem 5.2]{BCJO}: for which prime numbers $p,q$
and positive integers $n,m$ there exists a simple left brace of
order $p^nq^m$?

In the next remark we explain that there are some obvious
restrictions on $n$ and $m$.

\begin{remark}
{\rm  If  $B$ is a simple left brace of order $p_1^{m_1}p_2^{m_2}$,
where $p_1,p_2$ are distinct prime numbers and $m_1,m_2$ are
positive integers, then   the Sylow subgroups of the multiplicative
group of $B$  are not normal in $(B,\cdot)$ (see
\cite[Proposition~6.1]{B3}). Hence, by the Sylow Theorems, $p_1$
should be a divisor of $p_2^t-1$,  for some $1\leq t\leq m_2$, and
$p_2$ should be a divisor of $p_1^s-1$, for some $1\leq s\leq m_1$.
Hence if $k_1$ is the order of $p_1$ in $(\Z/(p_2))^*$ and $k_2$ is
the order of $p_2$ in $(\Z/(p_1))^*$, then $k_i\leq m_i$, for
$i=1,2$.

 By \cite[Remark~5.3]{BCJO}, if $p$ is an odd prime and $k$ is
the order of $2$ in $(\Z/(p))^*$, then any left brace of order
$2^kp$ is not simple.}
\end{remark}

For the construction of simple left braces as explained in Section 3
we use elements of prime order $p$ in the orthogonal group over
$\Z/(q)$, for a prime $q\neq p$. We now first recall what is needed
about the orders of these orthogonal groups.

Let $p$ be an odd prime number. Following \cite{Wi}, we denote
by $GO_{2m+1}(p)$ the orthogonal group (unique up to isomorphism) of
a $(2m+1)$-dimensional vector space over the field of $p$ elements
with respect to a non-singular symmetric bilinear form (equivalently
a non-degenerate quadratic form), for any positive integer $m$.
There are two non-isomorphic orthogonal groups of a $2m$-dimensional
vector space $V$ over the field of $p$ elements with respect to a
non-singular symmetric bilinear form $b$ denoted by $GO^+_{2m}(p)$
(if $V$ has a totally isotropic subspace of dimension $m$) and
$GO^-_{2m}(p)$ (if $V$ has no totally isotropic subspace of
dimension $m$). The orders of these groups are the following:
$$|GO_{2m+1}(p)|=2p^{m^2}(p^2-1)(p^4-1)\cdots (p^{2m}-1),$$
$$|GO^+_{2m}(p)|=2p^{m(m-1)}(p^2-1)(p^4-1)\cdots (p^{2m-2}-1)(p^m-1)$$
and
$$|GO^-_{2m}(p)|=2p^{m(m-1)}(p^2-1)(p^4-1)\cdots
(p^{2m-2}-1)(p^m+1).$$

For the prime $2$ there are two classes of non-singular symmetric
bilinear forms: alternating and non-alternating. Alternating
non-singular symmetric bilinear forms are over $2m$-dimensional
$\Z/(2)$-vector spaces   and  their orthogonal groups coincide with
the symplectic group $\mathrm{Sp}_{2m}(2)$ that has order
$$|\mathrm{Sp}_{2m}(2)|=2^{m^2}(2^2-1)(2^4-1)\cdots (2^{2m}-1),$$
(see \cite{Wi}). The orthogonal group $\mathrm{O}(m,2)$ of a
non-alternating non-singular symmetric bilinear form $b$ over an
$m$-dimensional $\Z/(2)$-vector space $V$ has order
$$|\mathrm{O}(2t+1,2)|=2^{t^2}(2^2-1)(2^4-1)\cdots (2^{2t}-1),$$
if $m=2t+1$, and
$$|\mathrm{O}(2t,2)|=2^{t^2}(2^2-1)(2^4-1)\cdots (2^{2(t-1)}-1),$$
if $m=2t$ (see \cite{MW}).

\begin{remark}\label{ortho}{\rm
 Let $p_1$ be an odd prime number. Let $p\neq p_1$ be a prime
number. Let $k$ be the order of $p$ in $(\Z/(p_1))^*$.  If $p$ and
$k$ are odd, then
\begin{itemize}
\item[(i)] $p_1$ is a divisor of $|GO_{2m+1}(p)|$ if and only if
$k\leq m$,
\item[(ii)] $p_1$ is a divisor of $|GO^+_{2m}(p)|$ if and only if
$k\leq m$,
\item[(iii)] $p_1$ is a divisor of $|GO^-_{2m}(p)|$ if and only if
$k\leq m-1$.
\end{itemize}
If $p$ is odd and $k$ is even, then
\begin{itemize}
\item[(iv)] $p_1$ is a divisor of $|GO_{2m+1}(p)|$ if and only if
$k\leq 2m$,
\item[(v)] $p_1$ is a divisor of $|GO^+_{2m}(p)|$ if and only if
$k\leq 2m-2$,
\item[(vi)] $p_1$ is a divisor of $|GO^-_{2m}(p)|$ if and only if
$k\leq 2m$.
\end{itemize}
If $p=2$ and $k$ is odd, then
\begin{itemize}
\item[(vii)] $p_1$ is a divisor of $|\mathrm{Sp}_{2m}(2)|$ if and only if
$k\leq m$,
\item[(viii)] $p_1$ is a divisor of $|\mathrm{O}(2t+1,2)|$ if and only if
$k\leq t$,
\item[(ix)] $p_1$ is a divisor of $|\mathrm{O}(2t,2)|$ if and only if
$k\leq t-1$.
\end{itemize}
 If $p=2$ and  $k$ is even, then
\begin{itemize}
\item[(x)] $p_1$ is a divisor of $|\mathrm{Sp}_{2m}(2)|$ if and only if
$k\leq 2m$,
\item[(xi)] $p_1$ is a divisor of $|\mathrm{O}(2t+1,2)|$ if and only if
$k\leq 2t$,
\item[(xii)] $p_1$ is a divisor of $|\mathrm{O}(2t,2)|$ if and only if
$k\leq 2(t-1)$.
\end{itemize}}
\end{remark}

\begin{remark}\label{decomposition}{\rm
Let $p_1,p_2$ be two distinct prime numbers. Let $k_1$ be the
order of $p_1$ in $(\Z/(p_2))^*$.  Note that the field
$\F_{p_1^{k_1}}$ of $p_1^{k_1}$ elements is the smallest field of
characteristic $p_1$ that contains an element of multiplicative
order $p_2$. Consider $x^{p_2}-1\in \Z/(p_1)[x]$ and its
decomposition as a product of monic irreducible polynomials
$$x^{p_2}-1=(x-1)q_1(x)\cdots q_s(x).$$
Note that $\F_{p_1^{k_1}}$ contains all roots of $q_j(x)$ for all
$j$. Let $\alpha\in \F_{p_1^{k_1}}$ be a root of $q_j(x)$. Then the
subfield $\left( \Z/(p_1)\right) (\alpha)$ of $\F_{p_1^{k_1}}$ is $\F_{p_1^{k_1}}$
by the minimality of $\F_{p_1^{k_1}}$. Furthermore,
$$\F_{p_1^{k_1}}=\left( \Z/(p_1)\right) (\alpha)\cong(\Z/(p_1)[x])/(q_j(x)).$$
Therefore the degree of $q_j(x)$ is $k_1$, for all $j=1,\dots ,s$.
Note that $q_1(x),\dots ,q_s(x)$ are $s$ distinct monic
polynomials.}
\end{remark}

For a positive integer $k$ we define
\begin{eqnarray} \label{nu}
\nu(k)&=& \left\{\begin{array}{ll}
1&\quad\mbox{if $k$ is even,}\\
2&\quad\mbox{if $k$ is odd.}
\end{array}\right.
\end{eqnarray}

\begin{theorem}\label{minimal}
Let $p_1$ be an odd prime number. Let $p\neq p_1$ be a prime number.
Let $k$ be the order of $p$ in $(\Z/(p_1))^*$.  Then the minimal
dimension of a $\Z/(p)$-vector space $V$ with a non-singular
bilinear form $b$ and $f\in \mathrm{O}(V,b)$ of order $p_1$ such
that $f-\id$ is bijective is $\nu(k)k$. Furthermore,
\begin{itemize}
\item[(i)] if $p$ and $k$ are odd, then $\mathrm{O(V,b)}\cong
GO^+_{2k}(p)$,
\item[(ii)] if $p$ is odd and $k$ is even, then $\mathrm{O}(V,b)\cong
GO^-_{k}(p)$,
\item[(iii)] if $p=2$ and $k$ is odd, then
$\mathrm{O}(V,b)\cong \mathrm{Sp}_{2k}(2)$, and
\item[(iv)] if $p=2$ and $k$ is even, then $\mathrm{O}(V,b)\cong \mathrm{Sp}_{k}(2)$.
\end{itemize}
\end{theorem}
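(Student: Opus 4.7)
The plan is to analyze $V$ as a semisimple $\Z/(p)[f]$-module and combine this with the compatibility between $f$ and the bilinear form $b$. Since $f^{p_1}=\id$ while $f-\id$ is bijective, the minimal polynomial of $f$ divides $(x^{p_1}-1)/(x-1)=q_1(x)\cdots q_s(x)$, where by Remark~\ref{decomposition} each $q_j$ is irreducible over $\Z/(p)$ of degree $k$. Since $p\neq p_1$, the polynomial $x^{p_1}-1$ is separable, hence $f$ is semisimple and $V$ decomposes as a direct sum of irreducible $\Z/(p)[f]$-modules, each isomorphic to $\F_{p^k}$ on which $f$ acts as multiplication by some primitive $p_1$-th root of unity $\alpha_i$. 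In particular, $k$ divides $\dim V$.

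The lower bound on $\dim V$ comes from a trace-form analysis. Modelling an irreducible component as $\F_{p^k}$, every $\Z/(p)$-bilinear form $V_i\times V_j\to\Z/(p)$ has the shape $(u,v)\mapsto\mathrm{Tr}_{\F_{p^k}/\F_p}(u\,\phi(v))$ for a unique $\Z/(p)$-linear map $\phi\colon V_j\to V_i$, and $f$-invariance forces $\phi(\alpha_j v)=\alpha_i^{-1}\phi(v)$. Thus a nonzero $\phi$ exists if and only if $\alpha_i^{-1}$ lies in the Galois orbit of $\alpha_j$; specialising to $i=j$ this holds if and only if $-1\in\langle p\rangle\leq(\Z/(p_1))^{*}$, equivalently if and only if $k$ is even. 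Consequently, when $k$ is odd a single irreducible component supports no nonzero $f$-invariant form, so $V$ must contain at least two components and $\dim V\geq 2k$; when $k$ is even, one component already suffices, so $\dim V\geq k$.

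To match these bounds I would produce explicit models. For $k$ even, take $V=\F_{p^k}$ with $f$ equal to multiplication by a primitive $p_1$-th root $\alpha$ and $b(u,v)=\mathrm{Tr}_{\F_{p^k}/\F_p}(uv^{\sigma})$, where $\sigma\colon x\mapsto x^{p^{k/2}}$ is the order-two Galois automorphism sending $\alpha\mapsto\alpha^{-1}$. For $k$ odd, take $V=\F_{p^k}\oplus\F_{p^k}$ with $f$ acting as multiplication by $\alpha$ and by $\alpha^{-1}$ on the two summands, and $b((u_1,u_2),(v_1,v_2))=\mathrm{Tr}_{\F_{p^k}/\F_p}(u_1v_2+u_2v_1)$. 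Symmetry, non-degeneracy and $f$-invariance are routine trace-form verifications; bijectivity of $f-\id$ is automatic because its minimal polynomial is coprime to $x-1$.

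It remains to identify $\mathrm{O}(V,b)$ in each case. In (i), $V$ is $2k$-dimensional and the summand $V_1$ is totally isotropic of dimension $k=\dim V/2$, giving $\mathrm{O}(V,b)\cong GO_{2k}^{+}(p)$. In (ii), Remark~\ref{ortho}(v) shows $GO_k^{+}(p)$ contains no element of order $p_1$ (which would require $k\leq k-2$), forcing $\mathrm{O}(V,b)\cong GO_k^{-}(p)$. In (iii), the previous paragraph forces both summands to be totally isotropic, so in characteristic two $b(w,w)=b(w_1,w_2)+b(w_2,w_1)=0$; hence $b$ is alternating and $\mathrm{O}(V,b)\cong\mathrm{Sp}_{2k}(2)$. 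In (iv), the trace description forces $\lambda\in\F_{2^{k/2}}$ by symmetry and then $b(u,u)=\mathrm{Tr}_{\F_{2^k}/\F_{2}}(\lambda N_{\F_{2^k}/\F_{2^{k/2}}}(u))$, which vanishes by the tower-of-traces formula since the inner trace doubles an element of $\F_{2^{k/2}}$ and $2=0$; so again $b$ is alternating and $\mathrm{O}(V,b)\cong\mathrm{Sp}_{k}(2)$. The main obstacle I anticipate is ensuring in case~(iv) that \emph{every} admissible non-singular symmetric $b$ is alternating, not merely the specific model; this rests on the trace classification of all $f$-invariant symmetric forms on a single irreducible component together with the characteristic-two vanishing identity just described.
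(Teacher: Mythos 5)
Your proof is correct, but it takes a genuinely different route from the paper's. The paper argues via group orders: by Remark~\ref{ortho}, no orthogonal group on a space of dimension below $\nu(k)k$ has order divisible by $p_1$ (so no element of order $p_1$ exists there at all), and in dimension $\nu(k)k$ exactly one type of form does; existence of $f$ then follows from Cauchy's theorem, and the substantive step is showing that bijectivity of $f-\id$ is \emph{automatic} for every order-$p_1$ element of that group. For $k$ even this is immediate from the minimal polynomial being irreducible of degree $k$; for $k$ odd it is a geometric argument in which a nonzero $\ker(f-\id)$ would force the $f$-cyclic subspace $W$ generated by a vector of $\ker(q(f))$ to be singular (Remark~\ref{ortho} again), hence totally isotropic, hence orthogonal to $\ker(f-\id)$ and so contained in $\rad(b)$, contradicting non-singularity. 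You instead work from the module structure: decompose $V$ as a semisimple $\Z/(p)[f]$-module, classify the $f$-invariant forms via Schur's lemma and the trace pairing (a single $k$-dimensional component carries a nonzero invariant form if and only if $-1\in\langle p\rangle$, i.e.\ $k$ even), and exhibit explicit trace-form models in dimension $\nu(k)k$. Each approach buys something: the paper's proves the stronger facts that minimality holds even without requiring $f-\id$ bijective, and that in the minimal dimension \emph{every} order-$p_1$ element satisfies the bijectivity condition — your models alone give neither; yours is more constructive and self-contained, producing explicit triples $(V,b,f)$, classifying all admissible forms, and invoking the order formulas only once (Remark~\ref{ortho}(v) to rule out $GO^+_{k}(p)$ in case (ii)), with cases (i), (iii), (iv) identified instead by total isotropy of the components and the characteristic-two norm/trace computation. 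One wording caveat: in case (i) you refer to ``the summand $V_1$'' of your model, but the claim must hold for every minimal $V$; this is fine because for any such $V$ with $k$ odd the restriction of $b$ to each of its two irreducible components is an invariant form on a single component and hence zero by your classification — precisely the general form of the argument you spell out in case (iii).
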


\begin{proof}
First suppose that $k$ is odd. By Remark~\ref{ortho}, if $p$ is odd,
then
\begin{itemize}
\item[(i)] $p_1$ is a divisor of $|GO_{2m+1}(p)|$ if and only if
$k\leq m$,
\item[(ii)] $p_1$ is a divisor of $|GO^+_{2m}(p)|$ if and only if
$k\leq m$,
\item[(iii)] $p_1$ is a divisor of $|GO^-_{2m}(p)|$ if and only if
$k\leq m-1$.
\end{itemize}
By Remark~\ref{ortho}, if $p=2$, then
\begin{itemize}
\item[(vii)] $p_1$ is a divisor of $|\mathrm{Sp}_{2m}(2)|$ if and only if
$k\leq m$,
\item[(viii)] $p_1$ is a divisor of $|\mathrm{O}(2t+1,2)|$ if and only if
$k\leq t$,
\item[(ix)] $p_1$ is a divisor of $|\mathrm{O}(2t,2)|$ if and only if
$k\leq t-1$.
\end{itemize}

Let $V$ be a $\Z/(p)$-vector space of dimension $2k$ and let $b$ be
a non-singular symmetric bilinear form such that either
$\mathrm{O(V,b)}\cong GO^+_{2k}(p)$, if $p$ is odd, or
$\mathrm{O}(V,b)\cong \mathrm{Sp}_{2k}(2)$, if $p=2$. Let $f\in
\mathrm{O}(V,b)$ be an element of order $p_1$. We shall prove that
$f-\id$ is bijective.

Suppose that $\ker(f-\id)\neq 0$. Since $\dim(V)=2k$ and $f\neq
\id$, by Remark~\ref{decomposition}, the minimal polynomial of $f$
is $m_{f}(x) =(x-1)q(x)$, for some monic irreducible polynomial
$q(x)\in \Z/(p)[x]$ of degree $k$, which is a divisor of
$x^{p_{1}}-1$. Hence $V=\ker(f-\id)\oplus \ker(q(f))$. Let
$\vec{u}\in \ker(q(f))$ be a nonzero element. Consider the linear
span $W$ of $\{ \vec{u}, f(\vec{u}),\dots ,f^{k-1}(\vec{u})\}$. By
Remark~\ref{ortho}, $W$ is  singular. Therefore, the radical $I$ of
the restriction of $b$ to $W$ is a nonzero subspace of $W$ invariant
by $f$, and the restriction of $f$ to $I$ has minimal polynomial
$q(x)$. Hence $I$ has dimension $k$, and then $I=W$ is a totally
isotropic subspace invariant by $f$, of dimension $k$. Note that
$\dim(\ker(q(f)))<2k$. Hence $W=\ker(q(f))$. Let $\vec{v}\in
\ker(f-\id)$. Then for every $\vec{w}\in W$, we have
$$b(\vec{w},\vec{v})=b(f(\vec{w}),f(\vec{v}))=b(f(\vec{w}),\vec{v}).$$
Therefore $b((f-\id)(\vec{w}),\vec{v})=0$. Since the restriction of
$f-\id$ to $W$ is bijective, we have that $b(\vec{w},\vec{v})=0$,
for all $\vec{w}\in W$ and all $v\in \ker(f-\id)$. But then
$W\subseteq\rad(b)$, a contradiction because $b$ is non-singular.
Therefore $\ker(f-\id)=0$ and the result follows in this case.

Suppose now that $k$ is even. By Remark~\ref{ortho}, if $p$ is odd,
then
\begin{itemize}
\item[(iv)] $p_1$ is a divisor of $|GO_{2m+1}(p)|$ if and only if
$k\leq 2m$,
\item[(v)] $p_1$ is a divisor of $|GO^+_{2m}(p)|$ if and only if
$k\leq 2m-2$,
\item[(vi)] $p_1$ is a divisor of $|GO^-_{2m}(p)|$ if and only if
$k\leq 2m$.
\end{itemize}
By Remark~\ref{ortho}, if $p=2$, then
\begin{itemize}
\item[(x)] $p_1$ is a divisor of $|\mathrm{Sp}_{2m}(2)|$ if and only if
$k\leq 2m$,
\item[(xi)] $p_1$ is a divisor of $|\mathrm{O}(2t+1,2)|$ if and only if
$k\leq 2t$,
\item[(xii)] $p_1$ is a divisor of $|\mathrm{O}(2t,2)|$ if and only if
$k\leq 2(t-1)$.
\end{itemize}

Let $V$ be a $\Z/(p)$-vector space of dimension $k$ and let $b$ be a
non-singular symmetric bilinear form such that either
$\mathrm{O(V,b)}\cong GO^-_{k}(p)$, if $p$ is odd, or
$\mathrm{O}(V,b)\cong \mathrm{Sp}_{k}(2)$, if $p=2$. Let $f\in
\mathrm{O}(V,b)$ be an element of order $p_1$. We shall prove that
$f-\id$ is bijective.

Since $\dim(V)=k$ and $f\neq \id$, again by
Remark~\ref{decomposition}, the minimal polynomial of $f$ is a monic
irreducible polynomial of degree $k$, which is a divisor of
$x^{p_{1}}-1$ in $\Z/(p)[x]$. Hence $f-\id$ is bijective in this
case.

Therefore the result follows.
\end{proof}

\begin{remark}\label{min} {\rm
 Let $n>1$ be an integer. For $z\in \Z/(n)$, let $p_z$ be a prime
number. We assume that $p_{z}\neq p_{z'}$ for $z\neq z'$. Let
$k_z$ be the order of $p_{z}$ in $(\Z/(p_{z-1}))^*$. Let $m_z$ be
positive integers. Let $V_z$ be a  $\Z/(p_z)$-vector space of
dimension either $\dim(V_z)=\nu(k_z)k_z$, if $p_{z-1}\neq 2$, or
$\dim(V_z)=1$, if $p_{z-1}=2$. By Theorem~\ref{minimal}, there
exist a non-singular symmetric bilinear form $b_z$ over $V_z$ and
$f_z\in \mathrm{O}(V_z,b_z)$ such that $f_z$ has order $p_{z-1}$
and $f_z-\id$ is bijective. Let $r_z=1$ for all $z\in\Z/(n)$.
Then, as in Section 3 and by Theorem~\ref{simple0}, we can
construct a simple left brace $T\rtimes_{\circ} S$ of order
$p_1^{m_1\dim(V_1)+1}\cdots p_n^{m_n\dim(V_n)+1}$ such that its
multiplicative group is a metabelian $A$-group. In particular, for
$n=2$, $p_1$ and $p_2$ odd prime numbers and $m_1=m_2=1$, we
obtain simple left braces of order
$p_1^{\nu(k_1)k_1+1}p_2^{\nu(k_2)k_2+1}$ and also of order
$p_1^22^{2k+1}$, where $k$ is the order of $2$ in $(\Z/(p_1))^*$.

Furthermore, in our main result, Theorem~\ref{MainResult}, by
Remark~\ref{dimension}, we can take $l_z=\max\{\nu(k_z)k_z,
\nu(k_{z-1})k_{z-1}\}(\nu(k_z)k_z+1)$, if $p_{z-1}\neq 2$, and
$l_z=2\nu(k_{z-1})k_{z-1}$, if $p_{z-1}= 2$, where $k_z$ is the
order of $p_{z}$ in $(\Z/(p_{z-1}))^*$, for all $z\in \Z/(n)$.}
\end{remark}

In view of the above, the following result provides the best
bounds on the exponents showing up in Theorem~\ref{MainResult}
that can be obtained using the approach of this paper.

\begin{theorem}\label{main2}
Let $n>1$ be an integer. For $z\in \Z/(n)$, let $p_z$ be a prime
number. We assume that $p_{z}\neq p_{z'}$ for $z\neq z'$. Let $k_z$
be the order of $p_{z}$ in $(\Z/(p_{z-1}))^*$. Let
$$l_z=\left\{
\begin{array}{ll}
\max\{\nu(k_z)k_z, \nu(k_{z-1})k_{z-1}\}(\nu(k_z)k_z+1)&\text{ if }
p_{z-1}\neq 2\\
2\nu(k_{z-1})k_{z-1}&\text{ if }
p_{z-1}= 2\\
\end{array}\right.$$
Then for each $n$-tuple of integers $m_1\geq l_1$, $m_2 \geq l_2,
\dots ,m_n\geq l_n$ there exists a simple left brace of order
$p_{1}^{m_1}p_{2}^{m_2}\cdots p_n^{m_{n}}$ that has a metabelian
multiplicative group with abelian Sylow subgroups.
\end{theorem}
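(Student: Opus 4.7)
The plan is to assemble the statement from three ingredients already in place: the general simple brace construction of Section~3, the minimal-dimension result Theorem~\ref{minimal}, and the flexibility in the choice of $l_z$ pointed out in Remark~\ref{dimension}. Concretely, for each $z \in \Z/(n)$ I would first produce a $\Z/(p_z)$-vector space $V_z$ with a non-singular symmetric bilinear form $b_z$ and an element $f_z \in \mathrm{O}(V_z,b_z)$ of order $p_{z-1}$ such that $f_z - \id$ is bijective, using the smallest possible $V_z$. If $p_{z-1}$ is odd, Theorem~\ref{minimal} produces such a $V_z$ of dimension $\nu(k_z)k_z$. If $p_{z-1}=2$, I would use the elementary construction $V_z=\Z/(p_z)$, $b_z(a,b)=ab$, $f_z(a)=-a$; here $f_z$ has order $2$, preserves $b_z$, and $f_z-\id=-2\,\id$ is bijective because $p_z$ is odd, giving $\dim(V_z)=1$.

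Next, I would feed these data into the construction of Section~3 exactly as in the proof of Theorem~\ref{MainResult}: given $m_z \geq l_z$, divide $m_z = m'_z \dim(V_z) + r_z$ with $0 < r_z \leq \dim(V_z)$, set $T_z = V_z^{m'_z}$ and $S_z = (\Z/(p_z))^{r_z}$, and form the asymmetric product $T \rtimes_{\circ} S$. Simplicity follows from Theorem~\ref{simple0} (each $f_z-\id$ is bijective by construction), the metabelian $A$-group structure of the multiplicative group from Proposition~\ref{construction1}, and the order count $\prod_z p_z^{m'_z \dim(V_z)+r_z} = \prod_z p_z^{m_z}$ is immediate.

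The only genuinely new verification is that the explicit $l_z$ stated in the theorem dominates the quantity $\max\{\dim(V_z),\dim(V_{z-1})\}(\dim(V_z)+1)$ from Remark~\ref{dimension}, which is what is needed to guarantee $m'_z \geq \max\{r_z,r_{z-1}\}$. I would split this into two cases. If $p_{z-1} \neq 2$, then $\dim(V_z) = \nu(k_z)k_z$, and $\dim(V_{z-1})$ is either $\nu(k_{z-1})k_{z-1}$ (if $p_{z-2}\neq 2$) or $1$ (if $p_{z-2}=2$); in both situations $\dim(V_{z-1}) \leq \nu(k_{z-1})k_{z-1}$, so the stated $l_z = \max\{\nu(k_z)k_z,\nu(k_{z-1})k_{z-1}\}(\nu(k_z)k_z+1)$ suffices. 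If $p_{z-1}=2$, then $\dim(V_z)=1$ and, since the primes are distinct, necessarily $p_{z-2}\neq 2$, so $\dim(V_{z-1}) = \nu(k_{z-1})k_{z-1} \geq 2 > 1$; hence $\max\{\dim(V_z),\dim(V_{z-1})\}(\dim(V_z)+1) = 2\nu(k_{z-1})k_{z-1}$, matching the stated $l_z$.

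The conceptual work has all been done in Sections 3 and 4; the only mild subtlety, and the one place where I would focus attention, is the bookkeeping that enforces the distinctness of the $p_z$ in the case $p_{z-1}=2$ so that $\dim(V_{z-1})$ is automatically at least $2$, thereby justifying that the simpler formula $2\nu(k_{z-1})k_{z-1}$ is both correct and of the same form as the general case with $\dim(V_z)=1$ plugged in.
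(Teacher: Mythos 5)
Your proposal is correct and follows essentially the same route as the paper, which states Theorem~\ref{main2} without a separate proof precisely because it is the combination, recorded in Remarks~\ref{dimension} and~\ref{min}, of the proof of Theorem~\ref{MainResult} with the minimal dimensions from Theorem~\ref{minimal} and the one-dimensional construction $V=\Z/(p)$, $f(a)=-a$ for the case $p_{z-1}=2$ from Section~4. Your added bookkeeping---checking that the stated $l_z$ dominates $\max\{\dim(V_z),\dim(V_{z-1})\}(\dim(V_z)+1)$, and using distinctness of the primes to conclude $\dim(V_{z-1})=\nu(k_{z-1})k_{z-1}\geq 2$ when $p_{z-1}=2$---is exactly the verification the paper leaves implicit.
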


\begin{remark}{\rm
 In Remark~\ref{min} we have seen the minimal orders of the form
$p^{n}q^{m}$, for distinct prime numbers $p,q$, of the left simple
braces constructed as in Section 3. We do not know whether these are
the minimal possible orders of simple left braces such that their
multiplicative groups are metabelian $A$-groups. But without this
restriction on the multiplicative group we can obtain simple left
braces of smaller order.

For example, consider the primes $3$ and $7$. The order of $7$ in
$(\Z/(3))^*$ is $1$ and the order of $3$ in $(\Z/(7))^*$ is $6$.
Using the construction of Section 3, we can construct simple left
braces of orders $3^{6m_1+r_1}7^{2m_2+r_2}$ for all positive
integers $m_1,m_2,r_1,r_2$ such that $m_i \geq r_j$, for all
$i,j$. The smallest simple left brace of this form has order
$3^{7}7^{3}$.

Since $3|(7-1)$ we can also construct simple left braces of order
$3^{n}7^{m}$ using \cite[Subsection~5.3]{BCJO2}.  Indeed, with this
method one can construct simple left braces of orders
$3^{6mn+1}7^n$, for all positive integers $m,n$, with multiplicative
group isomorphic to
$$((\Z/(3))^{6mn}\rtimes (\Z/(7))^n)\rtimes \Z/(3)$$
and of derived length $3$. The Sylow $7$-subgroups of such groups
are abelian, but the Sylow $3$-subgroups of such groups are not
abelian (they are metabelian). Note that we obtain new possible
orders with this method: $3^{6m+1}\cdot 7$ and $3^{12m+1}\cdot 7^2$,
for every positive integer $m$. In particular, there is a
simple left brace of order $3^7\cdot 7$.}
\end{remark}

\section{Finite prime left braces with multiplicative $A$-group}

Konovalov, Smoktunowicz and Vendramin introduced prime left braces
(see \cite{KSV}).  Recall that in a left brace $B$, the operation
$*$ is defined by $a*b=ab-a-b$, and for ideals $I$ and $J$, $I*J$
denotes the additive subgroup of $B$ generated by $\{a*b\mid a\in
I,\; b\in J\}$. By an analogy with ring theory, a left brace $B$ is
prime if $I*J\neq 0$ for all non-zero ideals $I$ and $J$ of $B$. It
is clear that every non-trivial simple left brace is prime. In
\cite[Question~4.4]{KSV}  the following question is posed: do there
exist finite prime non-simple left braces?  Using the construction
of left braces as introduced in Section~3 we  will show that the
answer to this question is affirmative. Therefore this also gives an
affirmative answer to \cite[Question~4.3]{KSV}.

Recall that if $A,B$ are left braces and $\alpha\colon
(B,\cdot)\longrightarrow \Aut(A,+,\cdot)$ is a homomorphism of
groups then the semidirect product of left braces $A\rtimes B$ via
$\alpha$ is the asymmetric product (as defined in
Section~\ref{agroup}) with zero bilinear map $b$. So, this is a left
brace with multiplicative group the semidirect product of the
multiplicative groups of $A$ and $B$, and with addition defined
componentwise (see \cite{Rump5} or \cite[p. 113 ]{CJO2}).

We denote the subgroup of the inner automorphisms of a group $G$ by
$\Inn(G)$.

\begin{proposition}\label{prime}
Let $A$ be any non-trivial finite simple left brace. Suppose that
there exists an element
$\alpha\in\Aut(A,+,\cdot)\setminus\Inn(A,\cdot)$ of prime order
$p$. Then the semidirect product $B=A\rtimes \Z/(p)$, via the
action $z\mapsto \alpha^{z}$, for $z\in\Z/(p)$, is a prime
non-simple left brace.
\end{proposition}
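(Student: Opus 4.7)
My plan is to reduce the verification of primeness of $B$ to two facts about its ideals: (a) the copy $A_0:=A\times\{0\}$ is a proper non-zero ideal of $B$, and (b) every non-zero ideal of $B$ contains $A_0$. Fact (a) should be routine: $A_0$ is the kernel of the natural brace epimorphism $B\to\Z/(p)$, and, using the semidirect product multiplication $(a,k)\cdot(a',k')=(a\cdot\alpha^k(a'),k+k')$, a direct check gives $\lambda_{(a,k)}((a',0))=(\lambda_a(\alpha^k(a')),0)\in A_0$, so $A_0$ is a left ideal of $B$. It is proper and non-zero since $|A|>1$ and $p>1$, so in particular $B$ fails to be simple.

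For (b), I would take a non-zero ideal $I$ of $B$ and consider $I\cap A_0$, which is a left ideal of $B$ contained in $A_0$ and is normal in $(B,\cdot)$; it therefore identifies with an ideal of $A$. By simplicity of $A$, either $A_0\subseteq I$ (in which case we are done) or $I\cap A_0=0$. In the second case, $I$ injects non-trivially into $\Z/(p)$, and since $\Z/(p)$ has prime order the projection is forced to be surjective, so $|I|=p$ and $I=\langle(a_0,1)\rangle$ for some $a_0\in A$. Imposing that $I$ be normal under conjugation by $(a',0)\in A_0$, a direct semidirect product computation gives $(a',0)(a_0,1)(a',0)^{-1}=(a'\cdot a_0\cdot\alpha((a')^{-1}),1)$; since this projects to $1\in\Z/(p)$, it must coincide with the unique element $(a_0,1)$ of $I$ at that level, whence $a'\cdot a_0\cdot\alpha((a')^{-1})=a_0$ for every $a'\in A$. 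Rearranging yields $\alpha(a')=a_0^{-1}\cdot a'\cdot a_0$ for all $a'\in A$, so $\alpha$ is conjugation by $a_0^{-1}$ in $(A,\cdot)$, contradicting $\alpha\notin\Inn(A,\cdot)$.

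Once (a) and (b) are in place, primeness follows easily: for any non-zero ideals $I,J$ of $B$ we have $A_0\subseteq I$ and $A_0\subseteq J$, hence $A_0*A_0\subseteq I*J$, and since $A$ is non-trivial there exist $a,b\in A$ with $a*b=ab-a-b\neq 0$, giving $I*J\neq 0$. The main obstacle will be step (b), where the non-innerness of $\alpha$ is essential: without this hypothesis the generator $(a_0,1)$ of a potential complement of $A_0$ could produce a genuine ideal of order $p$, blocking the conclusion that $A_0$ sits inside every non-zero ideal.
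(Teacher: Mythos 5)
Your proof is correct, and it reaches the crucial contradiction by a genuinely different mechanism than the paper's. Both arguments hinge on the same dichotomy: for a nonzero ideal $I$, either $I\cap(A\times\{0\})$ is nonzero (and simplicity of $A$ forces $A\times\{0\}\subseteq I$) or $I\cap(A\times\{0\})=0$, which must be ruled out using $\alpha\notin\Inn(A,\cdot)$. In the second case the paper works with an arbitrary nonzero $(a,b)\in I$ and uses the brace-theoretic Lemma~\ref{ideal} together with the left-ideal property: both $(\lambda_{(a,b)}-\id)(c,d)=(a\alpha^b(c)-a-c,0)$ and $\lambda_{(c,d)}(a,b)-(a,b)=(c\alpha^d(a)-c-a,0)$ lie in $I\cap(A\times\{0\})=0$, which combine to give $\alpha^b(c)=a^{-1}ca$ for all $c\in A$; since $b\neq 0$ in $\Z/(p)$ and $\alpha$ has order $p$, the power $\alpha^b$ being inner forces $\alpha$ inner. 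You instead exploit only the multiplicative normality of ideals: you first pin down $I$ as a cyclic group of order $p$ projecting isomorphically onto $\Z/(p)$, and then conjugating its generator $(a_0,1)$ by elements $(a',0)$ forces $\alpha$ itself to be conjugation by $a_0^{-1}$. Your route is purely group-theoretic and bypasses Lemma~\ref{ideal} entirely, at the small cost of first establishing $|I|=p$; the paper's route needs no information about the order of $I$ and, phrased as it is, also records the full ideal lattice $0\subset A\times\{0\}\subset B$. A further minor difference at the end: the paper invokes $A*A=A$ (simplicity plus non-triviality, using that $A*A$ is an ideal), whereas you only need the weaker and more elementary fact that $A*A\neq 0$, which suffices for primeness. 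Both proofs are complete and correct.
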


\begin{proof}
First, we claim that the only ideals of $B$ are $\{ (0,0)\}$,
$A\times\{ 0\}$ and $B$. Let $I$ be a nonzero ideal of $B$. If
$I\cap (A\times\{ 0\})$ is nonzero, then, since $A$ is a simple left
brace, we have  that $A\times \{ 0\}\subseteq I$, and therefore
either $I=A\times\{ 0\}$ or $I=B$. Suppose that $I\cap(A\times\{
0\})=\{(0,0)\}$. Let $(a,b)\in I$ be a nonzero element.  By
Lemma~\ref{ideal} we have that
$$(a,b)(c,d)-(a,b)-(c,d)=(a\alpha^b(c)-a-c,0)=(\lambda_{(a,b)}-\id)(c,d)\in I,$$
and
$$(c,d)(a,b)-(c,d)=(c\alpha^d(a)-c,b)=\lambda_{(c,d)}(a,b)\in I,$$
for all $(c,d)\in B$. Since $I\cap (A\times\{ 0\})=\{(0,0)\}$ and
$(a,b)\neq (0,0)$, it follows that $b\neq 0$, $a\alpha^b(c)=a+c$
and $c\alpha^d(a)=a+c$, for all $(c,d)\in B$. Hence
$a\alpha^b(c)=c\alpha^{d}(a)$ for every $(c,d)\in B$, so that
$a\alpha^b(c)=ca$, for all $c\in A$. Since $b$ is a nonzero
element in $\Z/(p)$ and $\alpha$ has order $p$, we have that
$\alpha\in\Inn(A,\cdot)$, in contradiction with the choice of
$\alpha$. Thus the only ideals of $B$ are indeed: $\{ (0,0)\}$,
$A\times\{ 0\}$ and $B$. Since $A$ is a non-trivial simple left
brace, $A*A=A$. Since $(A\times\{ 0\})*(A\times\{
0\})=(A*A)\times\{ 0\}=A\times\{ 0\}$, $B$ is prime and the result
follows.
\end{proof}

\begin{remark}{\rm  Let $A$ be a left brace. Note that
$\Aut(A,+,\cdot)$ is clearly a subgroup of $\Aut(A,\cdot)$ and also
is a subgroup of $\Aut(A,+)$. It is easy to check that
$\Inn(A,\cdot)$ is contained in $\Aut(A,+,\cdot)$ exactly when $A$
is a two-sided brace.  Indeed, let $B=\{ a\in A\mid
a^{-1}(b+c)a=a^{-1}ba+a^{-1}ca, \text{ for all } b,c\in A\}$. Note
that
\begin{eqnarray*}B&=&\{ a\in A\mid
a^{-1}(b+c)a=a^{-1}ba+a^{-1}ca, \text{ for all } b,c\in
A\}\\
&=&\{ a\in A\mid (b+c)a=a^{-1}(ab+ac-a)a=ba+ca-a, \text{ for all }
b,c\in A\}.
\end{eqnarray*}
It is clear that $B$ is a subgroup of the multiplicative group of
$A$. Let $a,b\in B$ and $c,d\in A$. We have
\begin{eqnarray*}(c+d)(a-b)&=&(c+d)a-(c+d)b+c+d\\
&=&ca+da-a-cb-db+b+c+d\\
&=&c(a-b)+d(a-b)-(a-b).\end{eqnarray*} Hence, $a-b\in B$ and thus
$B$ is a subbrace of $A$. Note that $B$ is a two-sided brace.

Note also that if $A$ is finite and $\Inn(A,\cdot)$ is
contained in $\Aut(A,+,\cdot)$, then every Sylow subgroup of $(A,+)$
is a normal Sylow subgroup of $(A,\cdot)$ and therefore $(A,\cdot)$
is nilpotent.}
\end{remark}

 We conclude with a concrete example of a finite prime non-simple
left brace,  based on  Proposition~\ref{prime}.

\begin{example}
There is a finite prime non-simple left brace $B$ with
multiplicative $A$-group.
\end{example}

\begin{proof}
First we construct a finite simple left brace using the main
construction of Section 3 and Theorem~\ref{simple0}.

Let $V_1=(\Z/(2))^2$ and $V_2=\Z/(3)$ be vector spaces over $\Z/(2)$
and $\Z/(3)$ respectively. Let $b_1\colon V_1\times
V_1\longrightarrow \Z/(2)$ be the symmetric bilinear form defined by
$$b_1((x_1,x_2),(y_1,y_2))=x_1y_2+x_2y_1,$$
for all $(x_1,x_2),(y_1,y_2)\in V_1$. Let $b_2\colon V_2\times
V_2\longrightarrow \Z/(3)$ be the symmetric bilinear  form defined
by
$$b_2(z,z')=zz',$$
for all $z,z'\in V_2$. Let $f_1$ be the automorphism of $V_1$
defined by
$$f_1(x_1,x_2)=(x_2,x_1+x_2),$$
for all $(x_1,x_2)\in V_1$. It is easy to check that $f_1\in
\mathrm{O}(V_1,b_1)$, it has order $3$ and $f_1-\id$ is bijective.
Let $f_2$ be the automorphism of $V_2$ defined by
$$f_2(z)=-z,$$
for all $z\in V_2$. It is easy to check that $f_2\in
\mathrm{O}(V_2,b_2)$, it has order $2$ and $f_2-\id$ is bijective.
Now, using the notation of Section 3, we take $n=2$, $m_1=5$,
$m_2=r_1=r_2=1$. Consider the trivial left braces $T_1=V_1^5$,
$T_2=V_2$, $S_1=\Z/(2)$ and $S_2=\Z/(3)$. Let $T=T_1\times T_2$
and $S=S_1\times S_2$ be the direct products of these left braces.
As in Section 3, we define $b'_1\colon T_1\times
T_1\longrightarrow S_1$ by
$$b'_1((u_1,u_2,u_3,u_4,u_5),(u'_1,u'_2,u'_3,u'_4,u'_5))=\sum_{j=1}^5b_1(u_j,u'_j),$$
for all $(u_1,u_2,u_3,u_4,u_5),(u'_1,u'_2,u'_3,u'_4,u'_5)\in T_1$,
and $b'_2\colon T_2\times T_2\longrightarrow S_2$ by
$b'_2(v,v')=b_2(v,v')$, for all $v,v'\in T_2$.  Let $f^{(1,2)}\colon
S_1\longrightarrow \Aut(T_2,+)$ be the map defined by
$f^{(1,2)}(s_1)=f^{(1,2)}_{s_1}$ and
$$f^{(1,2)}_{s_1}(v)=f_2^{s_1}(v),$$
for all $s_1\in S_1$ and $v\in T_2$. Let $f^{(2,1)}\colon
S_2\longrightarrow \Aut(T_1,+)$ be the map defined by
$f^{(2,1)}(s_2)=f^{(2,1)}_{s_2}$ and
$$f^{(2,1)}_{s_2}(u_1,u_2,u_3,u_4,u_5)=(f_1^{s_2}(u_1),f_1^{s_2}(u_2),f_1^{s_2}(u_3),f_1^{s_2}(u_4),f_1^{s_2}(u_5)),$$
for all $s_2\in S_2$ and $(u_1,u_2,u_3,u_4,u_5)\in T_1$. By
Lemma~\ref{orthof}, $f^{1,2}_{s_1}\in \mathrm{O}(T_2,b'_2)$ and
$f^{(2,1)}_{s_2}\in \mathrm{O}(T_1,b'_1)$. Let $b\colon T\times
T\longrightarrow S$ be the symmetric bilinear map defined by
$$b((t_1,t_2),(t'_1,t'_2))=(b'_1(t_1,t'_1),b'_2(t_2,t'_2)),$$
for all $(t_1,t_2),(t'_1,t'_2)\in T$. Let $\alpha\colon
S\longrightarrow \Aut(T,+)$ be the map defined by
$\alpha(s_1,s_2)=\alpha_{(s_1,s_2)}$ and
$$\alpha_{(s_1,s_2)}(t_1,t_2)=(f^{(2,1)}_{s_2}(t_1),f^{(1,2)}_{s_1}(t_2)),$$
for all $(s_1,s_2)\in S$ and $(t_1,t_2)\in T$. By
Theorem~\ref{simple0}, the asymmetric product $T\rtimes_{\circ}S$
via $b$ and $\alpha$ is a simple left brace.

Consider the trivial brace $\Z/(5)$ and $f\in\Aut(T_1,\cdot)$
defined by
$$f(u_1,u_2,u_3,u_4,u_5)=(u_2,u_3,u_4,u_5,u_1),$$
for all $(u_1,u_2,u_3,u_4,u_5)\in T_1$.  Note that $f\in
\mathrm{O}(T_1,b'_1)$ and $f^{(2,1)}_{s_2}f=ff^{(2,1)}_{s_2}$, for
all $s_2\in S_2$. Let $\beta\colon \Z/(5)\longrightarrow
\Aut(T\rtimes_{\circ}S,+,\cdot )$ be the map defined by
$\beta(a)=\beta_a$ and
$$\beta_a((t_1,t_2),(s_1,s_2))=((f^{a}(t_1),t_2),(s_1,s_2)),$$
for all $a\in \Z/(5)$ and $((t_1,t_2),(s_1,s_2))\in
T\rtimes_{\circ}S$. Note that
\begin{eqnarray*}
\lefteqn{\beta_a(((t_1,t_2),(s_1,s_2))+((t'_1,t'_2),(s'_1,s'_2)))}\\
&=&\beta_a((t_1+t'_1,t_2+t'_2),(s_1+s'_1+b'_1(t_1,t'_1),s_2+s'_2+b'_2(t_2,t'_2)))\\
&=&((f^{a}(t_1+t'_1),t_2+t'_2),(s_1+s'_1+b'_1(t_1,t'_1),s_2+s'_2+b'_2(t_2,t'_2)))\\
&=&((f^{a}(t_1)+f^{a}(t'_1),t_2+t'_2),(s_1+s'_1+b'_1(f^{a}(t_1),f^{a}(t'_1)),s_2+s'_2+b'_2(t_2,t'_2)))\\
&=&((f^{a}(t_1),t_2),(s_1,s_2))+((f^{a}(t'_1),t'_2),(s'_1,s'_2))\\
&=&\beta_a((t_1,t_2),(s_1,s_2))+\beta_a((t'_1,t'_2),(s'_1,s'_2)),
\end{eqnarray*}
and
\begin{eqnarray*}
\lefteqn{\beta_a(((t_1,t_2),(s_1,s_2))\cdot ((t'_1,t'_2),(s'_1,s'_2)))}\\
&=&\beta_a((t_1\cdot f^{(2,1)}_{s_2}(t'_1),t_2\cdot f^{(1,2)}_{s_1}(t'_2)),(s_1\cdot s'_1,s_2\cdot s'_2))\\
&=&((f^{a}(t_1\cdot f^{(2,1)}_{s_2}(t'_1)),t_2\cdot f^{(1,2)}_{s_1}(t'_2)),(s_1\cdot s'_1,s_2\cdot s'_2))\\
&=&((f^{a}(t_1)\cdot f^{a}f^{(2,1)}_{s_2}(t'_1),t_2\cdot f^{(1,2)}_{s_1}(t'_2)),(s_1\cdot s'_1,s_2\cdot s'_2))\\
&=&((f^{a}(t_1)\cdot f^{(2,1)}_{s_2}f^{a}(t'_1),t_2\cdot f^{(1,2)}_{s_1}(t'_2)),(s_1\cdot s'_1,s_2\cdot s'_2))\\
&=&((f^{a}(t_1),t_2),(s_1,s_2))\cdot ((f^{a}(t'_1),t'_2),(s'_1,s'_2))\\
&=&\beta_a((t_1,t_2),(s_1,s_2))\cdot
\beta_a((t'_1,t'_2),(s'_1,s'_2)),
\end{eqnarray*}
for all $a\in \Z/(5)$ and
$((t_1,t_2),(s_1,s_2)),((t'_1,t'_2),(s'_1,s'_2))\in
T\rtimes_{\circ}S$ (recall that $T$ and $S$ are trivial left
braces). Hence $\beta$ is well-defined and it is a homomorphism of
multiplicative groups. Since $5$ is not a divisor of $|T
\rtimes_{\circ} S|$, the automorphism $\beta_{1}$ is not inner.
Hence, by Proposition~\ref{prime}, the semidirect product
$B=(T\rtimes_{\circ})\rtimes (\Z/(5))$,  via the action $a\mapsto
\beta_a$, is a prime non-simple left brace. Clearly the Sylow
subgroups of the multiplicative group of the left brace $B$ are
abelian and the result follows.
\end{proof}

\section{Yet another example}

We conclude by showing that there is more potential in the
methods introduced in Section~\ref{construction}. Namely, we explain
briefly another concrete construction of new simple left braces that
on one hand looks much more symmetric than the construction used
before, but on the other hand it can not be used to
prove Theorem~\ref{MainResult}.

Let $n>1$ be an integer. For $z\in \Z/(n)$, let $p_z$ be a prime
number, $r_{z}$ a positive integer, $V_z$ a finite dimensional
$\Z/(p_z)$-vector space and let $b_z$ be a non-singular symmetric
bilinear form over $V_z$.  Assume also that there exist elements
$f_z \in O(V_z, b_z)$ of order $p_{z-1}$. We assume that
$p_{z}\neq p_{z'}$ for $z\neq z'$. For a vector space $V$ we
denote by $M_{s,r} (V)$ the $s\times r$-matrices over $V$.
Consider the trivial left braces $T_z =M_{r_z,r_{z-1}} (V_z)$ and
$S_z= \left( \Z/(p_z) \right)^{r_z}$.  By $e^{(z)}_{1},\dots
,e^{(z)}_{r_z}$ we denote the standard basis element of $\left(
\Z/(p_z) \right)^{r_z}$.

Let $b'_z:T_z\times T_z\longrightarrow S_z$ be the symmetric
bilinear map defined by
$$b'_z((u_{i,j}),(u'_{i,j}))=\sum_{i}^{r_z}\left(\sum_{j=1}^{r_{z-1}} b_{z}(u_{i,j} , u_{i,j}')\right)e_i^{(z)},$$
for all $(u_{i,j}),(u'_{i,j})\in T_z$. Let
$f^{(z-1,z)}:S_{z-1}\longrightarrow \Aut(T_z,+)$ be the map
defined by $f^{(z-1,z)}(s)=f_s^{(z-1,z)}$ and
$$f_s^{(z-1,z)}((u_{i,j}))=\left( f_{z}^{\mu_{j}}
(u_{i,j})\right) $$ for all
$s=\sum_{j=1}^{r_{z-1}}\mu_je_{j}^{(z-1)}\in S_{z-1}$ and
$(u_{i,j})\in T_z$.

\begin{lemma}\label{orthof3}
For each $z\in \Z/(n)$, the map
$f^{(z-1,z)}:S_{z-1}\longrightarrow \Aut(T_z,+)$ is a homomorphism
of multiplicative groups and $f_s^{(z-1,z)}\in
\mathrm{O}(T_{z},b'_z)$, for all $s\in S_{z-1}$.
\end{lemma}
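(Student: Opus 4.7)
The plan is to mirror the argument already used for Lemma \ref{orthof}, which dealt with the earlier, less symmetric definition of $b'_z$ and $f^{(z-1,z)}$. The two claims split cleanly: first that $f^{(z-1,z)}$ is a group homomorphism from $(S_{z-1},\cdot)$ to $\Aut(T_z,+)$, and second that each $f^{(z-1,z)}_s$ preserves the form $b'_z$.

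First I would handle the homomorphism property. Since $S_{z-1}$ is a trivial left brace, the multiplication and addition coincide, so for $s=\sum_j \mu_j e_j^{(z-1)}$ and $s'=\sum_j \mu'_j e_j^{(z-1)}$ one has $s\cdot s' = \sum_j(\mu_j+\mu'_j)e_j^{(z-1)}$. Applying the definition column-by-column, the $(i,j)$-entry of $f^{(z-1,z)}_{s\cdot s'}((u_{i,j}))$ is $f_z^{\mu_j+\mu'_j}(u_{i,j}) = f_z^{\mu_j}(f_z^{\mu'_j}(u_{i,j}))$, which is exactly the $(i,j)$-entry of $f^{(z-1,z)}_s \circ f^{(z-1,z)}_{s'}((u_{i,j}))$. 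Additivity of each $f^{(z-1,z)}_s$ on $T_z$ is inherited entrywise from the additivity of the powers $f_z^{\mu_j}$ on $V_z$, and bijectivity follows similarly from bijectivity of $f_z$. So $f^{(z-1,z)}$ lands in $\Aut(T_z,+)$ and is a group homomorphism.

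Next I would verify $f^{(z-1,z)}_s\in \mathrm{O}(T_z,b'_z)$. By the definition of $b'_z$, the value of $b'_z(f^{(z-1,z)}_s((u_{i,j})), f^{(z-1,z)}_s((u'_{i,j})))$ is
\[
\sum_{i=1}^{r_z}\Bigl(\sum_{j=1}^{r_{z-1}} b_z\bigl(f_z^{\mu_j}(u_{i,j}),\, f_z^{\mu_j}(u'_{i,j})\bigr)\Bigr)e_i^{(z)}.
\]
Since the hypothesis says $f_z\in \mathrm{O}(V_z,b_z)$, each inner pairing $b_z(f_z^{\mu_j}(u_{i,j}), f_z^{\mu_j}(u'_{i,j}))$ collapses to $b_z(u_{i,j}, u'_{i,j})$. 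Summing, one recovers $b'_z((u_{i,j}),(u'_{i,j}))$, which is the desired identity.

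I do not anticipate any real obstacle: the only subtlety, as in Lemma \ref{orthof}, is that the \emph{same} exponent $\mu_j$ is applied to both arguments in the $j$th column, which is exactly what the $f_z$-invariance of $b_z$ needs. The change from the earlier construction to the present one is purely organizational (matrices of vectors rather than tuples with a distinguished ``last'' coordinate), so no new ingredient beyond orthogonality of $f_z$ is required.
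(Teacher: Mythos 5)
Your proof is correct and follows exactly the route the paper intends: the paper's own proof of Lemma~\ref{orthof3} is just the remark ``Similar to the proof of Lemma~\ref{orthof}'', and your argument is precisely that adaptation, using triviality of the brace $S_{z-1}$ for the homomorphism property and $f_z\in\mathrm{O}(V_z,b_z)$ (hence all powers $f_z^{\mu_j}$) for preservation of $b'_z$. Nothing is missing.
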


\begin{proof} Similar to the proof of Lemma~\ref{orthof}
\end{proof}

Let $T=T_1\times \cdots\times T_n$ and $S=S_1\times\cdots\times
S_n$ be the direct products of the left braces $T_z$'s and $S_z$'s
respectively. Let $b\colon T\times T\longrightarrow S$ be the
symmetric bilinear map defined by
$$b((t_1,\dots ,t_n),(t'_1,\dots ,t'_n))=(b'_1(t_1,t'_1),\dots ,b'_n(t_n,t'_n)),$$
for all $(t_1,\dots ,t_n),(t'_1,\dots ,t'_n)\in T$. Let
$f^{(z,z')}\colon S_z\longrightarrow \Aut(T_z,+)$ be the trivial
homomorphism for all $z'\neq z+1$. Note that the homomorphisms
$f^{(z,z')}$ satisfy (\ref{commute}). Let $\alpha\colon
(S,\cdot)\longrightarrow \Aut(T,+)$ be the map defined by
$\alpha(s_1,\dots,s_n)=\alpha_{(s_1,\dots,s_n)}$ and
\begin{eqnarray*}
\alpha_{(s_1,\dots,s_n)}(t_1,\dots ,t_n)&=&(f^{(1,1)}_{s_1}\cdots
f^{(n,1)}_{s_n}(t_1),\dots ,f^{(1,n)}_{s_1}\cdots
f^{(n,n)}_{s_n}(t_n))\\
&=&(f^{(n,1)}_{s_n}(t_1),f^{(1,2)}_{s_1}(t_2),\dots
,f^{(n-1,n)}_{s_{n-1}}(t_n)),
\end{eqnarray*}
for all $(s_1,\dots,s_n)\in S$ and $(t_1, \dots, t_n)\in T$. By
Lemma~\ref{orthof3}, and Lemma~\ref{bilinear}, we have that
$\alpha_{(s_1,\dots,s_n)}\in \mathrm{O}(T,b)$, for all
$(s_1,\dots,s_n)\in S$ and we can construct the asymmetric product
$T\rtimes_{\circ} S$ of $T$ by $S$ via $b$ and $\alpha$.

For $z\in \Z/(n)$, let $A_z=\{ ((t_1,\dots ,t_n),(s_1,\dots
,s_n))\in T\rtimes_{\circ}S\mid t_{z'}=0 \text{ and }s_{z'}=0
\text{ for all }z'\neq z \}$. By Proposition~\ref{construction1},
$A_z$ is a left ideal of $T\rtimes_{\circ}S$, in fact it is the
Sylow $p_z$-subgroup of the additive group of $T\rtimes_{\circ}S$.

\begin{theorem} \label{abundance2}
The asymmetric product $T\rtimes_{\circ} S$ is a simple left brace
if and only of $f_{z}-\id$ is bijective for all $z\in \Z/(n)$.
\end{theorem}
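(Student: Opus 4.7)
My plan is to follow the template of Theorem~\ref{simple0}, with the tuple shape $T_z = V_z^{m_z}$ replaced by the matrix shape $T_z = M_{r_z, r_{z-1}}(V_z)$. For necessity, if $f_{z_0}-\id$ is not surjective for some $z_0$, I set $W = \im(f_{z_0}-\id) \subsetneq V_{z_0}$ and let $J$ consist of those elements of $T \rtimes_{\circ} S$ whose $T_{z_0}$-component is a matrix with every entry in $W$. Checking that $J$ is a proper nonzero ideal is routine: $b'_{z_0}$ takes values in $S_{z_0}$ (so the addition formula on the $T_{z_0}$-component is unaffected), while each $f^{(z_0-1,z_0)}_s$ acts on the $T_{z_0}$-entry column by column through powers of $f_{z_0}$, which preserves the condition entry-wise.

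For sufficiency, assume all $f_z-\id$ are bijective and let $I$ be a nonzero ideal. Since distinct $A_z$ are left ideals of pairwise coprime orders, I reduce to $0 \neq \xi \in I \cap A_k$ for some $k \in \Z/(n)$, and write $\xi = ((0,\dots,t_k,\dots,0),(0,\dots,s_k,\dots,0))$ with $t_k = (u_{i,j})$ and $s_k = \sum_i \mu_i e_i^{(k)}$. Set $a_{z,i} = ((0,\dots,0),(0,\dots,e_i^{(z)},\dots,0)) \in A_z$; the aim is to show every $a_{z,i}$ lies in $I$, and thence $A_z \subseteq I$ for every $z$.

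The first key claim yields some $a_{z,i} \in I$. If some $u_{i_0, j_0} \neq 0$, pick $u \in V_k$ with $b_k(u, u_{i_0, j_0}) \neq 0$ and let $c_k \in A_k$ have $T_k$-entry $u$ at slot $(i_0, j_0)$ and zeros elsewhere, with vanishing $S_k$-part. The addition/subtraction formulas in the asymmetric product give $\xi - \lambda_{c_k}(\xi) = ((0,\dots,0),(0,\dots,b_k(u, u_{i_0, j_0})\,e_{i_0}^{(k)},\dots,0)) \in I$, and scaling by the unit $b_k(u, u_{i_0, j_0})^{-1} \in \Z/(p_k)$ yields $a_{k, i_0} \in I$. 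If instead $t_k=0$ but some $\mu_i \neq 0$, the automorphism $f^{(k,k+1)}_{s_k}$ is nontrivial (since $f_{k+1}$ has order $p_k$), so applying Lemma~\ref{ideal} to some $c'_{k+1} \in A_{k+1}$ concentrated at one position in column $i$ of its $T_{k+1}$-part produces an element of $I \cap A_{k+1}$ with nonzero $T_{k+1}$-part, reducing to the previous case at level $k+1$. These are direct analogues of Claims~1--4 in the proof of Theorem~\ref{simple0}, and are slightly cleaner here.

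The second key claim is cyclic propagation. Given $a_{k,i_0} \in I$ and $t'_{k+1} = (v_{i,j}) \in T_{k+1}$, Lemma~\ref{ideal} delivers an element of $I \cap A_{k+1}$ whose $T_{k+1}$-part is the matrix with $(i, i_0)$-entry $(f_{k+1}-\id)(v_{i, i_0})$ and all other entries zero, plus a correction in $S_{k+1}$. Bijectivity of $f_{k+1}-\id$ lets me prescribe an arbitrary nonzero value in any position $(i_1, i_0)$; the first case of the first claim applied at level $k+1$ then gives $a_{k+1, i_1} \in I$ for every $i_1 \in \{1,\dots,r_{k+1}\}$. Iterating cyclically around $\Z/(n)$ produces every $a_{z, i}$; once they are available, the $S_{k+1}$-correction above sits in $I$ and can be subtracted off, so varying $t'_{k+1}$ and $i_0$ exhibits every element of $A_{k+1}$ inside $I$, and hence $A_{k+1} \subseteq I$ for every $k$, giving $I = T \rtimes_{\circ} S$. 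The main obstacle is keeping the two index roles straight: the row index $i$ of $t_z$ pairs via $b'_z$ into $S_z$, while the column index $j$ is touched via $f^{(z-1,z)}$ by $S_{z-1}$, so a single $a_{k,i_0}$ influences \emph{column} $i_0$ of $T_{k+1}$, and one must then test against a \emph{row} $i_1$ to re-enter $S_{k+1}$; the symmetric form of $b'_z$ used in Section~6 fortunately removes the asymmetric case split that was needed in Theorem~\ref{simple0}.
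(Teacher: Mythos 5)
Your proposal is correct and takes essentially the same approach as the paper: the necessity ideal built from $\im(f_{z}-\id)$ is the paper's ideal $J$ (restricting the condition to a single bad index $z_0$ changes nothing, since it is vacuous where $f_z-\id$ is bijective), and your sufficiency argument is exactly the adaptation of the proof of Theorem~\ref{simple0} that the paper invokes, with the row/column roles of the indices handled correctly and with the observation, matching the paper's remark that this case is ``easier,'' that the diagonal shape of $b'_z$ eliminates the $a_{k,j}+a_{k,r_k}$ case split. No gaps; you have simply written out the details the paper leaves to the reader.
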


\begin{proof}
To prove the necessity of the stated condition it is sufficient
(and easy) to check that $J=\{ ((t_1,\dots ,t_n),(s_1,\dots
,s_n))\in T\rtimes_{\circ} S \mid t_z =(u_{i,j}^{(z)})$ such that
$u_{i,j}^{(z)} \in \text{Im}(f_{z}-\id)  \}$ is an ideal of
$T\rtimes_{\circ} S$.

The proof of the converse is similar to the proof of
Theorem~\ref{simple0} (and in some sense it is easier).
\end{proof}

\vspace{30pt}
 \noindent \begin{tabular}{llllllll}
  F. Ced\'o && E. Jespers\\
 Departament de Matem\`atiques &&  Department of Mathematics \\
 Universitat Aut\`onoma de Barcelona &&  Vrije Universiteit Brussel \\
08193 Bellaterra (Barcelona), Spain    && Pleinlaan 2, 1050 Brussel, Belgium \\
 cedo@mat.uab.cat && Eric.Jespers@vub.be \\ \\
 J. Okni\'{n}ski && \\ Institute of
Mathematics &&
\\  Warsaw University &&\\
 Banacha 2, 02-097 Warsaw, Poland &&\\
 okninski@mimuw.edu.pl &&
\end{tabular}

 \end{document}